\documentclass{article}

\usepackage{arxiv}

\usepackage[utf8]{inputenc}
\usepackage[T1]{fontenc}
\usepackage{amsmath}
\usepackage{amsfonts}
\usepackage{amssymb}
\usepackage{amsthm}
\usepackage{mathtools}
\usepackage{tikz}
\usepackage{pgfplots}
\usepackage{algorithm}%
\usepackage{algorithmicx}%
\usepackage[noend]{algpseudocode}
\usepackage[colorlinks=true, linkcolor=blue]{hyperref}
\usepackage{cleveref}
\usepackage{booktabs}
\usepackage{mdframed}
\usepackage{appendix}

\usepgfplotslibrary{external}
\tikzset{external/system call = {%
    pdflatex \tikzexternalcheckshellescape%
    -halt-on-error
    -interaction=batchmode
    -jobname "\image" "\texsource"}}
\tikzexternalize[prefix=tikz-arxiv/]

\newtheorem{theorem}{Theorem}[section]

\newtheorem{lemma}[theorem]{Lemma}
\newtheorem{corollary}[theorem]{Corollary}
\newtheorem{example}[theorem]{Example}
\newtheorem{remark}[theorem]{Remark}
\newtheorem{definition}[theorem]{Definition}
\newtheorem{assumption}[theorem]{Assumption}

\raggedbottom%

\algrenewcommand\algorithmicrequire{\textbf{Input:}}
\algrenewcommand\algorithmicensure{\textbf{Output:}}

\numberwithin{equation}{section}

\usepackage{amsmath}
\usepackage{amsfonts}
\usepackage{amssymb}
\usepackage{mathtools}
\usepackage{tikz}
\usepackage{pgfplots}

\usepackage{acronym}
\acrodef{fom}[FOM]{full-order model}
\acrodef{lti}[LTI]{linear time-invariant}
\acrodef{mse}[MSE]{mean square error}
\acrodef{pde}[PDE]{partial differential equation}
\acrodef{pod}[POD]{proper orthogonal decomposition}
\acrodef{rb}[RB]{reduced basis method}
\acrodef{rom}[ROM]{reduced-order model}
\acrodef{ddrom}[DDROM]{data-driven reduced-order model}
\acrodefindefinite{lti}{an}{a}
\acrodefindefinite{mse}{an}{a}

\newcommand{\nfom}{\ensuremath{n}}
\newcommand{\nin}{\ensuremath{n_{\textnormal{f}}}}
\newcommand{\nout}{\ensuremath{n_{\textnormal{o}}}}

\newcommand{\Af}{\ensuremath{\mathcal{A}}}
\newcommand{\Bf}{\ensuremath{\mathcal{B}}}
\newcommand{\Cf}{\ensuremath{\mathcal{C}}}

\newcommand{\cAf}{\ensuremath{A}}
\newcommand{\cBf}{\ensuremath{B}}
\newcommand{\cCf}{\ensuremath{C}}

\newcommand{\caf}{\ensuremath{\alpha}}
\newcommand{\cbf}{\ensuremath{\beta}}
\newcommand{\ccf}{\ensuremath{\gamma}}

\newcommand{\qAf}{\ensuremath{q_{\Af}}}
\newcommand{\qBf}{\ensuremath{q_{\Bf}}}
\newcommand{\qCf}{\ensuremath{q_{\Cf}}}

\newcommand{\xf}{\ensuremath{x}}
\newcommand{\yf}{\ensuremath{y}}

\newcommand{\nrom}{\ensuremath{r}}

\newcommand{\Ar}{\ensuremath{\hat{\mathcal{A}}}}
\newcommand{\Br}{\ensuremath{\hat{\mathcal{B}}}}
\newcommand{\Cr}{\ensuremath{\hat{\mathcal{C}}}}

\newcommand{\cAr}{\ensuremath{\hat{A}}}
\newcommand{\cBr}{\ensuremath{\hat{B}}}
\newcommand{\cCr}{\ensuremath{\hat{C}}}

\newcommand{\car}{\ensuremath{\hat{\alpha}}}
\newcommand{\cbr}{\ensuremath{\hat{\beta}}}
\newcommand{\ccr}{\ensuremath{\hat{\gamma}}}

\newcommand{\qAr}{\ensuremath{q_{\Ar}}}
\newcommand{\qBr}{\ensuremath{q_{\Br}}}
\newcommand{\qCr}{\ensuremath{q_{\Cr}}}

\newcommand{\xr}{\ensuremath{\hat{x}}}
\newcommand{\yr}{\ensuremath{\hat{y}}}

\newcommand{\xrd}{\ensuremath{\xr_{d}}}

\newcommand{\romset}{\ensuremath{\mathcal{R}}}

\newcommand{\hA}{\ensuremath{\hat{A}}}
\newcommand{\hB}{\ensuremath{\hat{B}}}
\newcommand{\hC}{\ensuremath{\hat{C}}}
\newcommand{\hE}{\ensuremath{\hat{E}}}
\newcommand{\hH}{\ensuremath{\hat{H}}}

\newcommand{\hX}{\ensuremath{\hat{X}}}

\newcommand{\obj}{\ensuremath{\mathcal{J}}}
\newcommand{\objmse}{\ensuremath{\obj_{\textnormal{MSE}}}}

\newcommand{\cH}{\ensuremath{\mathcal{H}}}
\newcommand{\cL}{\ensuremath{\mathcal{L}}}
\newcommand{\Htwo}{\ensuremath{\cH_{2}}}
\newcommand{\Ltwo}{\ensuremath{\cL_{2}}}
\newcommand{\Hinf}{\ensuremath{\cH_{\infty}}}
\newcommand{\Linf}{\ensuremath{\cL_{\infty}}}

\newcommand{\HtwoLtwo}{\ensuremath{\cH_{2} \otimes \cL_{2}}}

\newcommand{\pp}{\ensuremath{\mathsf{p}}}
\newcommand{\pset}{\ensuremath{\mathcal{P}}}
\newcommand{\npar}{\ensuremath{n_{\textnormal{p}}}}

\newcommand{\CC}{\ensuremath{\mathbb{C}}}
\newcommand{\RR}{\ensuremath{\mathbb{R}}}

\newcommand{\CCpar}{\ensuremath{\CC^{\npar}}}
\newcommand{\CCf}{\ensuremath{\CC^{\nfom}}}

\newcommand{\CCff}{\ensuremath{\CC^{\nfom \times \nfom}}}
\newcommand{\CCfi}{\ensuremath{\CC^{\nfom \times \nin}}}
\newcommand{\CCof}{\ensuremath{\CC^{\nout \times \nfom}}}
\newcommand{\CCoi}{\ensuremath{\CC^{\nout \times \nin}}}

\newcommand{\CCor}{\ensuremath{\CC^{\nout \times \nrom}}}
\newcommand{\CCri}{\ensuremath{\CC^{\nrom \times \nin}}}

\newcommand{\CCrr}{\ensuremath{\CC^{\nrom \times \nrom}}}

\newcommand{\RRpar}{\ensuremath{\RR^{\npar}}}
\newcommand{\RRf}{\ensuremath{\RR^{\nfom}}}
\newcommand{\RRi}{\ensuremath{\RR^{\nin}}}
\newcommand{\RRo}{\ensuremath{\RR^{\nout}}}

\newcommand{\RRff}{\ensuremath{\RR^{\nfom \times \nfom}}}
\newcommand{\RRfi}{\ensuremath{\RR^{\nfom \times \nin}}}
\newcommand{\RRof}{\ensuremath{\RR^{\nout \times \nfom}}}

\newcommand{\RRfr}{\ensuremath{\RR^{\nfom \times \nrom}}}
\newcommand{\RRor}{\ensuremath{\RR^{\nout \times \nrom}}}
\newcommand{\RRri}{\ensuremath{\RR^{\nrom \times \nin}}}

\newcommand{\RRrr}{\ensuremath{\RR^{\nrom \times \nrom}}}

\DeclarePairedDelimiter{\myparen}{\lparen}{\rparen}
\DeclarePairedDelimiter{\mybrack}{\lbrack}{\rbrack}
\DeclarePairedDelimiter{\mybrace}{\lbrace}{\rbrace}

\DeclarePairedDelimiter{\card}{\lvert}{\rvert}
\DeclarePairedDelimiter{\abs}{\lvert}{\rvert}
\DeclarePairedDelimiter{\norm}{\lVert}{\rVert}
\DeclarePairedDelimiterXPP{\mydiag}[1]{\operatorname{diag}}{\lparen}{\rparen}{}{#1}
\DeclarePairedDelimiterXPP{\normtwo}[1]{}{\lVert}{\rVert}{_{2}}{#1}
\DeclarePairedDelimiterXPP{\normF}[1]{}{\lVert}{\rVert}{_{\operatorname{F}}}{#1}
\DeclarePairedDelimiterXPP{\normHtwo}[1]{}{\lVert}{\rVert}{_{\Htwo}}{#1}
\DeclarePairedDelimiterXPP{\normLtwo}[1]{}{\lVert}{\rVert}{_{\Ltwo}}{#1}
\DeclarePairedDelimiterXPP{\normLtwomu}[1]{}{\lVert}{\rVert}{_{\Ltwo(\pset, \measure)}}{#1}
\DeclarePairedDelimiterXPP{\normLinf}[1]{}{\lVert}{\rVert}{_{\Linf}}{#1}
\DeclarePairedDelimiterXPP{\normLinfmu}[1]{}{\lVert}{\rVert}{_{\Linf(\pset, \measure)}}{#1}
\DeclarePairedDelimiterXPP{\ip}[2]{}{\langle}{\rangle}{}{#1, #2}
\DeclarePairedDelimiterXPP{\ipF}[2]{}{\langle}{\rangle}{_{\operatorname{F}}}{#1, #2}
\DeclarePairedDelimiterXPP{\ipHtwo}[2]{}{\langle}{\rangle}{_{\Htwo}}{#1, #2}
\DeclarePairedDelimiterXPP{\ipLtwo}[2]{}{\langle}{\rangle}{_{\Ltwo}}{#1, #2}
\DeclarePairedDelimiterXPP{\ipLtwomu}[2]{}{\langle}{\rangle}{_{\Ltwo(\pset, \measure)}}{#1, #2}
\DeclarePairedDelimiterXPP{\trace}[1]{\operatorname{tr}}{\lparen}{\rparen}{}{#1}
\DeclarePairedDelimiterXPP{\vecop}[1]{\operatorname{vec}}{\lparen}{\rparen}{}{#1}
\DeclarePairedDelimiterXPP{\Real}[1]{\operatorname{Re}}{\lparen}{\rparen}{}{#1}
\DeclarePairedDelimiterXPP{\myspan}[1]{\operatorname{span}}{\lbrace}{\rbrace}{}{#1}

\newcommand{\measure}{\ensuremath{\mu}}

\DeclareMathOperator{\dif}{d\!}
\DeclareMathOperator*{\MIN}{minimize}

\DeclareMathOperator*{\esssup}{ess\,sup}

\newcommand{\difm}[1]{\ensuremath{\dif{\measure(#1)}}}

\newcommand{\fundef}[3]{\ensuremath{#1 \colon #2 \to #3}}

\newcommand{\tran}{^{\operatorname{T}}}

\newcommand{\herm}{^{*}}
\newcommand{\mherm}{^{-*}}

\newcommand{\cBfblock}{\ensuremath{\mathbf{B}}}
\newcommand{\cCfblock}{\ensuremath{\mathbf{C}}}
\newcommand{\cBrblock}{\ensuremath{\hat{\cBfblock}}}
\newcommand{\cCrblock}{\ensuremath{\hat{\cCfblock}}}

\newcommand{\imag}{\boldsymbol{\imath}}

\newcommand{\dotvar}{\,\cdot\,}

\newcommand{\cX}{\ensuremath{\mathcal{X}}}

\let\le\leqslant%
\let\ge\geqslant%
\let\hat\widehat%
%

\definecolor{mplC0}{HTML}{1F77B4}
\definecolor{mplC1}{HTML}{FF7F0E}
\definecolor{mplC2}{HTML}{2CA02C}
\definecolor{mplC3}{HTML}{D62728}
\definecolor{mplC4}{HTML}{9467BD}
\definecolor{mplC5}{HTML}{8C564B}
\definecolor{mplC6}{HTML}{E377C2}
\definecolor{mplC7}{HTML}{7F7F7F}
\definecolor{mplC8}{HTML}{BCBD22}
\definecolor{mplC9}{HTML}{17BECF}

\pgfplotsset{compat=1.10}
\pgfplotscreateplotcyclelist{mpl}{
  mplC0, very thick \\
  mplC1, very thick, densely dashed \\
  mplC2, very thick, densely dotted \\
  mplC3, very thick, densely dashdotted \\
}
\pgfplotscreateplotcyclelist{mplmark}{
  mplC0, very thick, mark=o, every mark/.append style={solid} \\
  mplC1, very thick, densely dashed, mark=x, every mark/.append style={solid}, mark size=3 \\
  mplC2, very thick, densely dotted, mark=square, every mark/.append style={solid} \\
  mplC3, very thick, densely dashdotted, mark=diamond, every mark/.append style={solid} \\
}
\pgfplotscreateplotcyclelist{mplmarkonly}{
  draw=mplC0, fill=mplC0, only marks, mark=*, mark size=1 \\
  draw=mplC1, fill=mplC1, only marks, mark=square*, mark size=1 \\
  draw=mplC2, fill=mplC2, only marks, mark=triangle*, mark size=1.5 \\
}

\begin{document}

\title{\texorpdfstring{$\Ltwo$}{L2}-optimal Reduced-order Modeling Using
  Parameter-separable Forms\thanks{%
    This work was partially funded by the U.S. National Science Foundation under
    grant DMS-1923221.
    Parts of this material are based upon work supported by the National
    Science Foundation under Grant No.\ DMS-1929284 while the authors were in
    residence at the Institute for Computational and Experimental Research in
    Mathematics in Providence, RI, during the Spring 2020 Reunion Event for
    Model and Dimension Reduction in Uncertain and Dynamic Systems program.
  }}

\author{%
  \href{https://orcid.org/0000-0002-9437-7698}{%
    \includegraphics[scale=0.06]{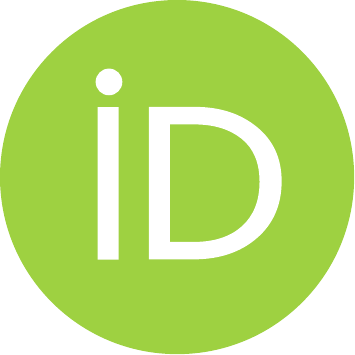}}\hspace{1mm}Petar~Mlinari\'c%
  \thanks{Department of Mathematics, Virginia Tech, Blacksburg, VA 24061
    (\texttt{mlinaric@vt.edu}).}
	\And
	\href{https://orcid.org/0000-0003-4564-5999}{%
    \includegraphics[scale=0.06]{orcid.pdf}}\hspace{1mm}Serkan~Gugercin%
  \thanks{Department of Mathematics and Division of Computational Modeling and
    Data Analytics, Academy of Data Science, Virginia Tech, Blacksburg, VA 24061
    (\texttt{gugercin@vt.edu}).}
}

\renewcommand{\headeright}{Technical Report}
\renewcommand{\undertitle}{Technical Report}
\renewcommand{\shorttitle}{$\Ltwo$-optimal Reduced-order Modeling}

\hypersetup{
  pdftitle={L2-optimal Reduced-order Modeling Using Parameter-separable Forms},
  pdfauthor={Petar~Mlinari\'c, Serkan~Gugercin},
}

\maketitle

\begin{abstract}
  We provide a unifying framework for $\Ltwo$-optimal reduced-order modeling for
  linear time-invariant dynamical systems and stationary parametric problems.
  Using parameter-separable forms of the reduced-model quantities,
  we derive the gradients of the $\Ltwo$ cost function with respect to the
  reduced matrices,
  which then allows a non-intrusive, data-driven, gradient-based descent
  algorithm to construct the optimal approximant using only output samples.
  By choosing an appropriate measure,
  the framework covers both continuous (Lebesgue) and discrete cost functions.
  We show the efficacy of the proposed algorithm via various numerical examples.
  Furthermore, we analyze under what conditions the data-driven approximant can
  be obtained via projection.
\end{abstract}

\keywords{%
  reduced-order modeling \and
  parametric stationary problems \and
  linear time-invariant systems \and
  optimization \and
  $\Ltwo$ norm \and
  nonlinear least squares
}

\section{Introduction}\label{sec:intro}
Consider a parameter-to-output mapping
\begin{equation}\label{eq:mapping}
  \fundef{\yf}{\pset}{\CCoi}, \quad
  \pp \mapsto \yf(\pp),
\end{equation}
where $\pset \subseteq \CCpar$ denotes the parameter space, and
$\npar, \nin, \nout$ are positive integers, representing the parameter, forcing
(input), and output dimensions of the underlying parametric model.
We are interested in cases where evaluating $\yf(\pp)$ for a given $\pp$ is
expensive
(thus causing a computational bottleneck in \emph{online} computations)
and we only have access to (the output) $\yf(\pp)$ without access to
an internal representation.

Our goal is to construct \iac{ddrom}
\begin{subequations}\label{eq:rom}
  \begin{align}
    \Ar(\pp) \xr(\pp) & = \Br(\pp), \\*
    \yr(\pp) & = \Cr(\pp) \xr(\pp),
  \end{align}
\end{subequations}
whose output $\yr(\pp)$ is significantly cheaper to evaluate compared to
$\yf(\pp)$ and
$\yr(\pp)$ is close to $\yf(\pp)$ for all $\pp \in \pset$.
In~\eqref{eq:rom} we have
$\Ar(\pp) \in \CCrr$,
$\Br(\pp) \in \CCri$,
$\Cr(\pp) \in \CCor$,
$\xr(\pp) \in \CCri$, and
$\yr(\pp) \in \CCoi$
where $\nrom$ is a modest integer so that evaluating $\yr(\pp)$
via~\eqref{eq:rom} is trivial.
The modeling structure in~\eqref{eq:rom} is motivated by model order reduction
for stationary parametric \acp{pde} and \ac{lti} dynamical systems as we briefly
explain next.

First, consider a parameterized linear \ac{pde} in the weak form
\begin{subequations}\label{eq:pde}
  \begin{align}
    a(\xi(\pp), \zeta; \pp)
    & =
      f(\zeta; \pp), \quad
      \forall \zeta \in \cX, \\*
    q(\pp)
    & =
      l(\xi(\pp); \pp),
  \end{align}
\end{subequations}
where
$\pp \in \pset \subseteq \RRpar$ is the parameter,
$\cX$ is a real Hilbert space,
$\xi(\pp) \in \cX$ is the solution, and
$q(\pp) \in \RR$ is the quantity of interest.
Furthermore,
$\fundef{a(\dotvar, \dotvar; \pp)}{\cX \times \cX}{\RR}$ is a coercive and
continuous bilinear form and
$\fundef{f(\dotvar; \pp), l(\dotvar; \pp)}{\cX}{\RR}$ are bounded linear
functionals
for all $\pp \in \pset$.
In the simple case with
$\npar = 1$,
$a(\xi, \zeta; \pp) = a_1(\xi, \zeta) + \pp a_2(\xi, \zeta)$,
$f(\zeta; \pp) = f(\zeta)$, and
$l(\xi; \pp) = l(\xi)$,
after a Galerkin projection onto an $\nfom$-dimensional subspace
$\myspan{\xi_1, \xi_2, \ldots, \xi_{\nfom}} \subset \cX$
(e.g., constructed by a finite element discretization),
we obtain a finite-dimensional model
\begin{subequations}\label{eq:stationary}
  \begin{align}
    (\cAf_1 + \pp \cAf_2) \xf(\pp) & = \cBf, \label{eq:stationary-state} \\*
    \yf(\pp) & = \cCf \xf(\pp),
  \end{align}
\end{subequations}
where $\xf(\pp) \in \RRf$ is the projected solution,
$\yf(\pp) \in \RR$ is the output approximating $q(\pp)$, and
$\cAf_1, \cAf_2 \in \RRff$ and
$B, C\tran \in \RR^{\nfom \times 1}$ are given component-wise by
${[\cAf_1]}_{ij} = a_1(\xi_j, \xi_i)$,
${[\cAf_2]}_{ij} = a_2(\xi_j, \xi_i)$,
${[\cBf]}_{i1} = f(\xi_i)$, and
${[\cCf]}_{1j} = l(\xi_j)$.
If, for example, $\cAf_1 + \pp \cAf_2$ is invertible for every $\pp \in \pset$,
then the parameter-to-output mapping in this case is given by
$\yf(\pp) = \cCf {(\cAf_1 + \pp \cAf_2)}^{-1} \cBf$.
This problem corresponds to $\nin = \nout = 1$.
Note that $\nin$ here represents the number of right-hand sides
in~\eqref{eq:stationary-state}, i.e.,
the number of forcing terms.
If, in addition, the problem~\eqref{eq:pde} is compliant, i.e.,
$a(\dotvar, \dotvar; \pp)$ is symmetric and $l = f$,
then $\cAf_1$ and $\cAf_2$ are symmetric and $C = B\tran$.

Now, consider \iac{lti} dynamical system described in state space as
\begin{subequations}\label{eq:lti}
  \begin{align}
    E \dot{\xf}(t) & = A \xf(t) + B u(t), \quad x(0) = 0, \\*
    \yf(t) & = C \xf(t),
  \end{align}
\end{subequations}
where
$t \in \RR$ is the time,
$u(t) \in \RRi$ is the input,
$\xf(t) \in \RRf$ is the state,
$\yf(t) \in \RRo$ is the output,
$E, A \in \RRff$,
$B \in \RRfi$, and
$C \in \RRof$.
By applying the Laplace transform to~\eqref{eq:lti},
we obtain $Y(s) = H(s) U(s)$,
where $U$ and $Y$ are, respectively, the Laplace transforms of $u$ and $\yf$.
Furthermore, $H(s) \in \CCoi$ is given by
\begin{equation}\label{eq:tf}
  H(s) = C {(s E - A)}^{-1} B
\end{equation}
and is called the transfer function, which is
at the heart of systems-theoretic approaches to optimal approximation of
\ac{lti} systems~\cite{AntBG20,Ant05}.%
\begin{subequations}\label{eq:Hsys}
We can rewrite $H(s)$ as
  \begin{align}
    (s E - A) X(s) & = B, \\*
    H(s) & = C X(s),
  \end{align}
\end{subequations}
for any $s \in \CC$ such that $s E - A$ is invertible and
$X(s) \in \CCfi$.

Therefore, both mappings, namely
$\pp \mapsto q(\pp)$ in~\eqref{eq:pde} and
$s \mapsto H(s)$ in~\eqref{eq:tf},
are examples of parameter-to-output mappings~\eqref{eq:mapping}
we consider in this paper.
Both models~\eqref{eq:stationary}
(resulting from discretization of a stationary parametric \ac{pde})
and~\eqref{eq:Hsys}
(frequency domain formulation of \iac{lti} system)
can be examined using the form
\begin{subequations}\label{eq:fom}
  \begin{align}
    \Af(\pp) \xf(\pp) & = \Bf(\pp), \\*
    \yf(\pp) & = \Cf(\pp) \xf(\pp),
  \end{align}
\end{subequations}
where $\pp \in \pset \subseteq \CCpar$ is the parameter,
$\xf(\pp) \in \CCfi$ is the state,
$\yf(\pp) \in \CCoi$ is the output,
$\Af(\pp) \in \CCff$,
$\Bf(\pp) \in \CCfi$, and
$\Cf(\pp) \in \CCof$.
Many applications require to solve the model~\eqref{eq:fom} in real time or for
many parameter values,
which incurs a computational bottleneck due to the large-scale dimension of the
underlying state-space.
The goal of model order reduction for parametric \acp{pde} and for \ac{lti}
systems is to replace~\eqref{eq:fom} with \iac{rom},
which motivates us to approximate the mapping~\eqref{eq:mapping} by the
\ac{ddrom} of the form~\eqref{eq:rom}.

Thus the framework we consider handles a wide range of problems
(stationary or dynamic) including those of the form in~\eqref{eq:fom}.
We revisit both problems~\eqref{eq:stationary} and~\eqref{eq:Hsys} throughout
the paper and illustrate how the theory applies in either case.
Furthermore, even though the motivation comes from \acp{fom} of the from
in~\eqref{eq:fom},
the approximation framework we develop below only requires access to the
parameter-to-output mapping~\eqref{eq:mapping} and
\emph{not} to the full-order operators $\Af, \Bf, \Cf$ and state $\xf$.
Thus, we work with a non-intrusive parameter/output data-driven formulation.
Therefore, we refer to our methodology as ``reduced-order modeling'' instead of
``model order reduction''.

It is worth mentioning that a similar setting of a parameter-to-output mapping
appears in, e.g., active subspaces~\cite{ConDW14}.
As~\cite{ConDW14} focuses on parameter reduction,
we believe it could be used in combination with the approach we propose here to
develop a combined parameter and state reduction method
(such as in~\cite{HimO14}).

There are different ways of measuring the distance between $\yf$ and $\yr$.
For instance, \ac{rb} methods~\cite{BenGQetal20b} are based on the $\Linf$ norm
\begin{equation*}
  \normLinf{\yf - \yr}
  = \sup_{\pp \in \pset} \, \normF*{\yf(\pp) - \yr(\pp)},
\end{equation*}
where $\normF{\dotvar}$ is the Frobenius norm.
For \ac{lti} systems, the corresponding measure is the $\Hinf$ norm and we refer
the reader to recent optimization-based algorithms for (structure-preserving)
$\Hinf$-optimal model order reduction~\cite{Mit16,SchV20}.

In this paper, we focus on a different norm.
Motivated by the work on $\Htwo$-optimal model order
reduction~\cite{GugAB06,GugAB08,AntBG10} for non-parametric \ac{lti} systems,
and extensions to $\HtwoLtwo$-optimal model order
reduction~\cite{BauBBetal11,Pet13,Gri18,HunMMS21} for parametric \ac{lti}
systems,
we are interested in $\Ltwo$-optimal reduced-order modeling for parametric
problems~\eqref{eq:mapping}.
Specifically,
we are interested in finding \iac{ddrom}~\eqref{eq:rom} that minimizes the
$\Ltwo$ error
\begin{equation*}
  \normLtwo{\yf - \yr}
  = \myparen*{\int_{\pset} \normF*{\yf(\pp) - \yr(\pp)}^2 \dif{\pp}}^{1/2}.
\end{equation*}
The goal is to develop the analysis
(and the resulting computational tools)
so that the framework equally applies
to parametric stationary problems as in~\eqref{eq:stationary} and
to dynamical system as in~\eqref{eq:lti}
by the proper definition of the parameter space and error measure.
Additionally,
we want the analysis to be applicable to more general measures $\measure$ over
the parameter space $\pset$, i.e.,
minimizing
\begin{equation}\label{eq:l2error}
  \normLtwomu{\yf - \yr}
  = \myparen*{\int_{\pset} \normF*{\yf(\pp) - \yr(\pp)}^2 \difm{\pp}}^{1/2}.
\end{equation}
For example, $\measure$ could be a probability measure over $\pset$ and the
parameter $\pp$ could be treated as a random variable.
Another example of a measure is a discrete measure
$\measure_{\textnormal{d}} = \sum_{i = 1}^N \delta_{\pp_i}$
where $\delta_{x}$ is the Dirac measure ($\delta_x(A) = \card{\{x\} \cap A}$)
and $\pp_1, \pp_2, \ldots, \pp_N$ are some parameter values,
which results in the error measure
\begin{equation*}
  \norm{\yf - \yr}_{\Ltwo(\pset, \measure_{\textnormal{d}})}
  = \myparen*{\sum_{i = 1}^N \normF*{\yf(\pp_i) - \yr(\pp_i)}^2}^{1/2}.
\end{equation*}
Therefore, we develop theoretical results that hold for both continuous and
discrete objective functions.

We note that the objective~\eqref{eq:l2error} is reminiscent of operator
inference~\cite{PehW16}.
However, the fundamental difference is that operator inference solves a linear
least-squares problem,
while we solve a nonlinear optimization problem.
This is due to the fact that while operator inference minimizes the residual,
our goal is to minimize the output error.
Furthermore, operator inference requires, in its original formulation,
full state snapshots,
while our approach only needs output measurements.
Additionally, operator inference is usually posed in the time domain
unlike our $\Ltwo$ measure which would be posed in the Laplace/frequency domain.

The main contributions of the paper are as follows:
\begin{itemize}
\item We develop a unifying formulation for $\Ltwo$-optimal data-driven
  reduced-order modeling,
  which applies to a wide range of problems with an appropriate definition of
  the measure space.
\item We derive explicit formulae for gradients of the $\Ltwo$ approximation
  error with respect to the matrices of the \ac{ddrom}.
  These gradient computations require access only to the model output without
  internal (state) samples.
\item Based on the gradient formulae, we develop a data-driven, gradient-based
  algorithm for $\Ltwo$-optimal reduced-order modeling.
\item We extend the framework to a discrete least-squares error function.
\item We analyze and give conditions under which the $\Ltwo$-optimal \ac{ddrom}
  can be obtained via projection.
\end{itemize}

The rest of the paper is organized as follows:
In \Cref{sec:preliminaries} we briefly recall projection-based model order
reduction, the most common framework for intrusive model order reduction.
We state the structured $\Ltwo$-optimal reduced-order modeling problem in
\Cref{sec:optimal-mor},
and derive the gradients of the squared $\Ltwo$ error with respect to the
matrices of the \ac{ddrom}.
Furthermore, there we discuss a generic optimization-based algorithm for
$\Ltwo$-optimal reduced-order modeling.
In \Cref{sec:numerics-cont} we focus on the continuous objective function and
provide numerical examples.
Then, we discuss discrete objective function in
\Cref{sec:numerics-disc},
where we demonstrate the generic algorithm on further examples.
In \Cref{sec:mor-by-projection} we return to projection-based model order
reduction and discuss whether $\Ltwo$-optimal \acp{ddrom} are projection-based.
Finally, \Cref{sec:conclusion} gives concluding remarks.

\subsection{Projection-based Model Order Reduction}%
\label{sec:preliminaries}
Even though our framework is data-driven and does not start with or need a
\ac{fom} to reduce,
in this section, we briefly recall the basics of the projection-based model
order reduction methods to help motivate the structure enforced on the
\ac{ddrom}~\eqref{eq:rom}.

For a \ac{fom}~\eqref{eq:fom},
the Petrov-Galerkin projection framework is one of the most common ways to
construct the \ac{rom}~\eqref{eq:rom}.
In this framework, given the \ac{fom}~\eqref{eq:fom},
one chooses two $\nrom$-dimensional subspaces of $\RRf$,
spanned by the columns of $V, W \in \RRfr$, and
constructs the \ac{rom}~\eqref{eq:rom} by
\begin{equation}\label{eq:proj}
  \Ar(\pp) = W\tran \Af(\pp) V, \quad
  \Br(\pp) = W\tran \Bf(\pp), \quad
  \Cr(\pp) = \Cf(\pp) V.
\end{equation}
If $V$ and $W$ span the same subspace,
this is called a Galerkin projection.

Even though $\Ar(\pp) \in \CCrr$, $\Br(\pp) \in \CCri$, $\Cr(\pp) \in \CCor$
in~\eqref{eq:proj} have the reduced row and/or column dimensions,
evaluating them for a new parameter value $\pp$ requires operations in the full
dimension $\nfom$.
Thus, for efficient computation of the \ac{rom},
it is often assumed that the \ac{fom} matrices have a parameter-separable form
(or that it can be approximated by one, e.g., using the empirical interpolation
method~\cite{BarMNP04}), i.e.,
\begin{equation}\label{eq:fom-par-sep-form}
  \Af(\pp) = \sum_{i = 1}^{\qAf} \caf_i(\pp) \cAf_i, \quad
  \Bf(\pp) = \sum_{j = 1}^{\qBf} \cbf_j(\pp) \cBf_j, \quad
  \Cf(\pp) = \sum_{k = 1}^{\qCf} \ccf_k(\pp) \cCf_k,
\end{equation}
where
$\qAf, \qBf, \qCf$ are small positive integers,
$\fundef{\caf_i, \cbf_j, \ccf_k}{\pset}{\CC}$ are given functions that are easy
to evaluate, and
$\cAf_i \in \RRff$,
$\cBf_j \in \RRfi$,
$\cCf_k \in \RRof$
are constant matrices.
Then, one computes the following \ac{rom} matrices only once
\begin{equation}\label{eq:galerkin}
  \cAr_i = W\tran \cAf_i V, \quad
  \cBr_j = W\tran \cBf_j, \quad
  \cCr_k = \cCf_k V,
\end{equation}
and the \ac{rom}~\eqref{eq:rom} is constructed efficiently as
\begin{equation}\label{eq:pg-proj-rom}
  \Ar(\pp) = \sum_{i = 1}^{\qAf} \caf_i(\pp) \cAr_i, \quad
  \Br(\pp) = \sum_{j = 1}^{\qBf} \cbf_j(\pp) \cBr_j, \quad
  \Cr(\pp) = \sum_{k = 1}^{\qCf} \ccf_k(\pp) \cCr_k.
\end{equation}
Thus, the full-order operators $\Af(\pp), \Bf(\pp), \Cf(\pp)$ are avoided when
solving the \ac{rom}.
There are many projection-based model order reduction methods and thus many
different ways of computing $V$ and $W$; see,
e.g.,~\cite{BenGQetal20a,BenGQetal20b,Ant05,AntBG20,BenGW15,QuaMN16,HesRS16,Benetal17}.
We revisit some of these methods in more detail in \Cref{sec:numerics-cont}.

\section{\texorpdfstring{$\Ltwo$}{L2}-optimal Reduced-order Modeling}%
\label{sec:optimal-mor}
In this section, we first establish the setting of the optimal reduced-order
modeling problem we consider and
prove the main theoretical result that forms the foundation of the proposed
algorithm.

\subsection{Setting}
We are interested in approximating a parameter-to-output
mapping~\eqref{eq:mapping} by \iac{ddrom}~\eqref{eq:rom}.
Although the motivation comes from the form of \acp{fom} as in~\eqref{eq:fom},
the framework we develop here does not require the full-order operators
$\Af$, $\Bf$, and $\Cf$ or the full-order state $\xf$.
Instead we only need (the samples of) the output $\yf$.
In other words,
we develop an optimal data-driven approximation formulation
that only uses the parameter/output samples of the model under consideration.
More specifically,
we consider the \acp{fom} accessible only via
a complex-valued output function $\fundef{\yf}{\pset}{\CCoi}$
where $\pset \subseteq \CCpar$ and
$(\pset, \Sigma, \measure)$ is a measure space.

We make some technical assumptions on the \ac{fom} valid for the general setup
we consider here.
Then by revisiting the specific \acp{fom} in~\eqref{eq:stationary}
and~\eqref{eq:Hsys},
we show that these are common assumptions and
automatically hold in most cases.
\begin{assumption}[\Ac{fom} assumptions]\label{assumption:fom}
  Let $(\pset, \Sigma, \measure)$ be a measure space and
  $\fundef{\yf}{\pset}{\CCoi}$ a measurable function.
  \begin{itemize}
  \item The set $\pset \subseteq \CCpar$ is closed under conjugation
    ($\overline{\pp} \in \pset$ for all $\pp \in \pset$),
  \item The $\sigma$-algebra $\Sigma$ is closed under conjugation
    ($\overline{S} \in \Sigma$ for all $S \in \Sigma$),
  \item The measure $\measure$ is closed under conjugation
    ($\measure(\overline{S}) = \measure(S)$ for all $S \in \Sigma$),
  \item The function $\yf$ is square-integrable
    ($\normLtwomu{\yf} < \infty$) and
    closed under conjugation
    ($\overline{\yf(\pp)} = \yf(\overline{\pp})$ for all $\pp \in \pset$).
  \end{itemize}
\end{assumption}
\begin{example}\label{ex:fom}
  We revisit the two basic examples from \Cref{sec:intro} under the setting of
  \Cref{assumption:fom}.
  First consider the \ac{fom}~\eqref{eq:stationary} resulting from the
  discretization of a stationary parametric \ac{pde}.
  Let $\pset = [a, b] \subset \RR$ and $\measure$ be the Lebesgue measure.
  Then, \Cref{assumption:fom} holds
  if $\cAf_1 + \pp \cAf_2$ is invertible for all $\pp \in \pset$,
  a common assumption.

  Now, recall the transfer function~\eqref{eq:tf} of \iac{lti}
  system~\eqref{eq:lti} formulated as a parametric stationary
  problem~\eqref{eq:Hsys}.
  One commonly used system norm is the Hardy $\Htwo$ norm $\normHtwo{\dotvar}$,
  which gives the output bound $\normLinf{\yf} \le \normHtwo{H} \normLtwo{u}$.
  The norm can be formulated as
  \(
    \normHtwo{H}
    =
    \myparen{
      \frac{1}{2 \pi} \int_{-\infty}^{\infty}
      \normF{H(\imag \omega)}^2 \dif{\omega}
    }^{1/2},
  \)
  assuming that $E$ is invertible and
  all the eigenvalues of $E^{-1} A$ have negative real parts,
  where $\imag$ denotes the imaginary unit.
  Therefore, to have $\normHtwo{H} = \normLtwomu{H}$,
  we can take
  $\pp = s$,
  $\pset = \imag \RR$, and
  $\measure = \frac{1}{2 \pi} \lambda_{\imag \RR}$,
  where $\lambda_{\imag \RR}$ is the Lebesgue measure over $\imag \RR$.
  A sufficient condition for \Cref{assumption:fom} to hold is that
  $E$ be invertible and $E^{-1} A$ have no eigenvalues on the imaginary axis,
  which are weaker assumptions than those needed to define the $\Htwo$ norm.
  These are also common assumptions in the systems-theoretic setting.
  One can indeed allow $E$ to be singular
  (i.e., allow systems of differential algebraic equations)
  as has been done in many earlier works~\cite{MehS05,GugSW13}.
  However, to keep the notation and discussion concise,
  we assume $E$ to be invertible.
\end{example}

\subsection{Optimization Problem with Parameter-separable Forms}
Given the parameter-to-output mapping in~\eqref{eq:mapping},
our goal is to find \iac{ddrom}~\eqref{eq:rom}
that minimizes the output $\Ltwo$ error~\eqref{eq:l2error}.
As discussed in \Cref{sec:preliminaries}, many \acp{fom} have a
parameter-separable form as in~\eqref{eq:fom-par-sep-form} and
this form is preserved in the classical Petrov-Galerkin projection-based
\acp{rom}.
Inspired by this formulation, in our $\Ltwo$-optimal \ac{ddrom} setting,
we search for a structured \ac{ddrom} with parameter-separable form
\begin{equation}\label{eq:rom-param-sep-form}
  \Ar(\pp) = \sum_{i = 1}^{\qAr} \car_i(\pp) \cAr_i, \quad
  \Br(\pp) = \sum_{j = 1}^{\qBr} \cbr_j(\pp) \cBr_j, \quad
  \Cr(\pp) = \sum_{k = 1}^{\qCr} \ccr_k(\pp) \cCr_k,
\end{equation}
where
$\qAr, \qBr, \qCr$ are positive integers,
$\fundef{\car_i, \cbr_j, \ccr_k}{\pset}{\CC}$ are given measurable functions,
and
$\cAr_i \in \RRrr$,
$\cBr_j \in \RRri$,
$\cCr_k \in \RRor$
are the (\ac{ddrom}) matrices we want to compute to minimize the $\Ltwo$
error~\eqref{eq:l2error}.
Note that even though the \ac{ddrom} structure is inspired by the
parameter-separable structures appearing in many \acp{fom} and preserved in
projection-based \acp{rom},
here we make no assumptions on the form of the \ac{fom},
but only on the form of the \ac{ddrom}.
The subtle notational difference between~\eqref{eq:pg-proj-rom}
and~\eqref{eq:rom-param-sep-form},
namely the ``hatted'' scalar functions,
aims to highlight that unlike in the projection-based
\ac{rom}~\eqref{eq:pg-proj-rom},
where the scalar functions match those of the \ac{fom},
in the \ac{ddrom}~\eqref{eq:rom-param-sep-form},
we have freedom in choosing them.
We also note that this parameter-separable structure appears
in~\cite{BenGP19,HunMMS21} as well.

We make the following assumptions on the scalar functions
$\car_i, \cbr_j, \ccr_k$ appearing in the
\ac{ddrom}~\eqref{eq:rom-param-sep-form}.
As we did for \Cref{assumption:fom},
later we discuss that these assumptions indeed hold trivially in many cases.
\begin{assumption}[Scalar functions]\label{assumption:scalar-funcs}
  Let $\pset$ and $\measure$ satisfy \Cref{assumption:fom}.
  The functions $\fundef{\car_i, \cbr_j, \ccr_k}{\pset}{\CC}$ are
  measurable,
  closed under conjugation, and satisfy
  \begin{equation}\label{eq:abc-l2-bounded}
    \int_{\pset}
    \myparen*{
      \frac{
        \sum_{j = 1}^{\qBr} \abs*{\cbr_j(\pp)}
        \sum_{k = 1}^{\qCr} \abs*{\ccr_k(\pp)}
      }{
        \sum_{i = 1}^{\qAr} \abs{\car_i(\pp)}
      }
    }^{\!\!2}
    \difm{\pp}
    < \infty.
  \end{equation}
\end{assumption}
Now based on \cref{assumption:fom} and \cref{assumption:scalar-funcs},
we introduce the set of allowable \ac{ddrom} matrices.
\begin{definition}\label{def:rom-set}
  Let $\pset$ and $\measure$ satisfy \Cref{assumption:fom} and
  $\fundef{\car_i, \cbr_j, \ccr_k}{\pset}{\CC}$ satisfy
  \Cref{assumption:scalar-funcs}.
  Next, let
  $R = {(\RRrr)}^{\qAr} \times {(\RRri)}^{\qBr} \times {(\RRor)}^{\qCr}$
  be the set of all tuples of \ac{ddrom} matrices
  $(\cAr_1, \ldots, \cAr_{\qAr},
  \cBr_1, \ldots, \cBr_{\qBr},
  \cCr_1, \ldots, \cCr_{\qCr})
  \eqqcolon (\cAr_i, \cBr_j, \cCr_k)$.
  Then, we define the set $\romset$ of allowable \ac{ddrom} matrices as
  \begin{equation}\label{eq:setR}
    \romset =
    \mybrace*{
      \myparen*{\cAr_i, \cBr_j, \cCr_k} \in R :
      \esssup_{\pp \in \pset} \, \normF*{\car_i(\pp) \Ar(\pp)^{-1}}
      < \infty,\
      i = 1, 2, \ldots, \qAr
    },
  \end{equation}
  where $\Ar$ is as in~\eqref{eq:rom-param-sep-form}.
\end{definition}
\begin{example}
  We want to illustrate that the conditions in~\eqref{eq:abc-l2-bounded}
  and~\eqref{eq:setR} do indeed hold trivially in many cases and
  thus are not restrictive.
  Continuing with \Cref{ex:fom},
  we want to cover the analogous \acp{ddrom}.
  Starting with the parametric stationary problem, we consider
  \begin{align*}
    \myparen*{\cAr_1 + \pp \cAr_2} \xr(\pp) & = \cBr, \\*
    \yr(\pp) & = \cCr \xr(\pp).
  \end{align*}
  For this case, we have
  \begin{equation*}
    \qAr = 2,\ \car_1(\pp) = 1,\ \car_2(\pp) = \pp; \quad
    \qBr = 1,\ \cbr_1(\pp) = 1; \textnormal{ and } \quad
    \qCr = 1,\ \ccr_1(\pp) = 1.
  \end{equation*}
  Therefore, the condition~\eqref{eq:abc-l2-bounded} becomes
  $\int_a^b \frac{1}{{(1 + \abs{\pp})}^2} \dif{\pp} < \infty$,
  which holds true.
  The condition in~\eqref{eq:setR} states that
  $\pp \mapsto \Ar(\pp)^{-1}$ and $\pp \mapsto \pp \Ar(\pp)^{-1}$ are
  bounded over $[a, b]$.
  The necessary and sufficient condition is that $\Ar(\pp)$ be invertible for
  all $\pp \in \pset$.

  Next, we consider a reduced-order \ac{lti} system
  \begin{subequations}\label{eq:romH}
    \begin{align}
      \myparen*{s \hE - \hA} \hX(s) & = \hB, \\*
      \hat{H}(s) & = \hC \hX(s).
    \end{align}
  \end{subequations}
  For this case, we have
  \begin{equation*}
    \qAr = 2,\ \car_1(\pp) = \pp,\ \car_2(\pp) = -1; \quad
    \qBr = 1,\ \cbr_1(\pp) = 1; \textnormal{ and } \quad
    \qCr = 1,\ \ccr_1(\pp) = 1,
  \end{equation*}
  with $\pp = s = \imag \omega$.
  The condition~\eqref{eq:abc-l2-bounded} becomes
  $\int_{-\infty}^{\infty}
  \frac{1}{{(\abs{\omega} + 1)}^2} \dif{\omega}
  < \infty$,
  which also holds.
  Finally, the condition in~\eqref{eq:setR} states that
  $s \mapsto (s \hE - \hA)^{-1}$ and
  $s \mapsto s (s \hE - \hA)^{-1} = (\hE - \frac{1}{s} \hA)^{-1}$
  are bounded over $\imag \RR$.
  This is equivalent to the invertibility of $\hE$ and
  $\hE^{-1} \hA$ having no eigenvalues in $\imag \RR$,
  similar to the \ac{fom} discussed in \Cref{ex:fom}.
\end{example}
The choice of functions $\fundef{\car_i, \cbr_j, \ccr_k}{\pset}{\CC}$ is
flexible as long as they satisfy \Cref{assumption:scalar-funcs}.
In many cases, such as for \ac{lti} systems as above,
physically-inspired choices are immediately available from the underlying
physics.

For the analysis in \Cref{sec:grad},
it is important to establish that the set $\romset$~\eqref{eq:setR} is open and
that it forms a set of feasible \acp{ddrom}.
This is what we do next.
\begin{lemma}\label{lem:rom-set-open-yr-bounded}
  The set $\romset$~\eqref{eq:setR} in \Cref{def:rom-set} is open.
  Moreover, for all $\yr$ defined by \iac{ddrom}
  $(\cAr_i, \cBr_j, \cCr_k) \in \romset$,
  we have that $\yr$ is square-integrable.
\end{lemma}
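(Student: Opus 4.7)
The plan is to handle the two claims in turn. Note that the condition defining $\romset$ in~\eqref{eq:setR} only constrains the $\cAr_i$ components, so the $\cBr_j$ and $\cCr_k$ slots contribute openness trivially; the real work is in the $\cAr$ direction. For square-integrability, the main idea is to bound $\norm{\Ar(\pp)^{-1}}_{\operatorname{F}}$ by $(\sum_i M_i)/\sum_i \abs{\car_i(\pp)}$, so that $\yr$ inherits its $\Ltwo$ bound directly from \Cref{assumption:scalar-funcs}.

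For openness, fix $(\cAr_i^0, \cBr_j^0, \cCr_k^0)\in\romset$ and let $\Ar^0(\pp)=\sum_i \car_i(\pp)\cAr_i^0$ with bounds $M_i \coloneqq \esssup_{\pp}\norm{\car_i(\pp)\Ar^0(\pp)^{-1}}_{\operatorname{F}} <\infty$. Given a perturbed tuple $(\cAr_i,\cBr_j,\cCr_k)$ with $\norm{\cAr_i-\cAr_i^0}_{\operatorname{F}}<\delta$, write $\delta\Ar(\pp)=\sum_i\car_i(\pp)(\cAr_i-\cAr_i^0)$ so that $\Ar(\pp)=\Ar^0(\pp)\bigl(I+\Ar^0(\pp)^{-1}\delta\Ar(\pp)\bigr)$. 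Submultiplicativity gives $\norm{\Ar^0(\pp)^{-1}\delta\Ar(\pp)}_{\operatorname{F}} \le \delta\sum_i M_i$ almost everywhere, so choosing $\delta$ small enough that $\delta\sum_i M_i\le \tfrac12$ makes $I+\Ar^0(\pp)^{-1}\delta\Ar(\pp)$ invertible via Neumann series with inverse of Frobenius norm $\le 2\sqrt{\nrom}$. Hence $\car_i(\pp)\Ar(\pp)^{-1}=(I+\Ar^0(\pp)^{-1}\delta\Ar(\pp))^{-1}\cdot\car_i(\pp)\Ar^0(\pp)^{-1}$ has essential norm at most $2\sqrt{\nrom}\,M_i$, putting the perturbed tuple in $\romset$.

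For square-integrability of $\yr(\pp)=\Cr(\pp)\Ar(\pp)^{-1}\Br(\pp)$, I would first note the key identity $\abs{\car_i(\pp)}\,\norm{\Ar(\pp)^{-1}}_{\operatorname{F}} = \norm{\car_i(\pp)\Ar(\pp)^{-1}}_{\operatorname{F}}\le M_i$ almost everywhere. Summing over $i$ and dividing,
\begin{equation*}
\norm{\Ar(\pp)^{-1}}_{\operatorname{F}}\,\le\,\frac{\sum_{i=1}^{\qAr} M_i}{\sum_{i=1}^{\qAr}\abs{\car_i(\pp)}}
\end{equation*}
for almost every $\pp$ where the denominator is positive (if the denominator vanishes on a set of positive $\measure$-measure, the integral in \eqref{eq:abc-l2-bounded} is infinite, contradicting \Cref{assumption:scalar-funcs}). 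Combined with the triangle inequality bounds $\norm{\Br(\pp)}_{\operatorname{F}}\le(\max_j\norm{\cBr_j}_{\operatorname{F}})\sum_j\abs{\cbr_j(\pp)}$ and likewise for $\Cr(\pp)$, together with the submultiplicative bound $\norm{\yr(\pp)}_{\operatorname{F}}\le\norm{\Cr(\pp)}_{\operatorname{F}}\norm{\Ar(\pp)^{-1}}_{\operatorname{F}}\norm{\Br(\pp)}_{\operatorname{F}}$, one obtains a pointwise bound of the form
\begin{equation*}
\norm{\yr(\pp)}_{\operatorname{F}}\,\le\,K\,\frac{\bigl(\sum_{j}\abs{\cbr_j(\pp)}\bigr)\bigl(\sum_{k}\abs{\ccr_k(\pp)}\bigr)}{\sum_{i}\abs{\car_i(\pp)}}
\end{equation*}
for a constant $K$ depending only on the $\cAr_i,\cBr_j,\cCr_k$. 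Squaring and integrating, the right-hand side is finite by exactly the condition~\eqref{eq:abc-l2-bounded} in \Cref{assumption:scalar-funcs}, yielding $\yr\in\Ltwo(\pset,\measure)$.

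The main technical subtlety is the edge case where $\sum_i\abs{\car_i(\pp)}=0$ on a non-null set; I would dispose of this upfront by observing that such a set must be null (otherwise the integrand in~\eqref{eq:abc-l2-bounded} is identically $+\infty$ there, contradicting the assumption), so all pointwise bounds in the argument are valid almost everywhere. The Neumann-series step for openness is straightforward once the right algebraic identity is isolated, and the integrability step reduces essentially to the single clever inequality that extracts $\norm{\Ar(\pp)^{-1}}_{\operatorname{F}}$ from the $\car_i$-weighted bounds.
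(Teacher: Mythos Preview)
Your proposal is correct and follows essentially the same approach as the paper: a Neumann-series perturbation argument for openness (the paper factors on the right rather than the left and uses a single bound $M=\max_i M_i$, but the idea is identical), and for square-integrability the same chain of submultiplicativity, triangle inequality, and the key step of bounding $\normF{\Ar(\pp)^{-1}}$ by a constant over $\sum_i\abs{\car_i(\pp)}$ to invoke~\eqref{eq:abc-l2-bounded}. Your direct route to that bound via summing $\abs{\car_i(\pp)}\normF{\Ar(\pp)^{-1}}\le M_i$ is in fact slightly cleaner than the paper's detour through $\max_i\abs{\car_i(\pp)}$, and your handling of the $\sqrt{\nrom}$ factor in the Neumann step is more careful than the paper's stated inequality $\normF{(I-X)^{-1}}\le 1/(1-\normF{X})$.
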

\begin{proof}
  Proof of this result is given in \Cref{sec:proofs}.
\end{proof}

\subsection{Computing the Gradients}%
\label{sec:grad}
In this section, we derive one of the main results of this paper,
mainly the gradients of the $\Ltwo$ cost function~\eqref{eq:l2error} with
respect to the \ac{ddrom} matrices $\cAr_i, \cBr_j, \cCr_k$.
These gradient formulae form the foundation of the $\Ltwo$-optimal reduced-order
modeling algorithm we develop.

Given the \ac{fom} as a parameter-to-output mapping $\yf$~\eqref{eq:mapping},
we want to construct \iac{ddrom}~\eqref{eq:rom} with the structured
reduced-order matrices of the form~\eqref{eq:rom-param-sep-form}.
Furthermore, we look for \iac{ddrom} belonging to $\romset$ from
\Cref{def:rom-set},
since this guarantees that the squared $\Ltwo$ error is well-defined and
differentiable over $\romset$ (as shown in \cref{thm:gradients})
without substantial restrictions on the form of the \ac{ddrom}.
Thus, we consider the structured $\Ltwo$-optimization problem
\begin{align*}
  \MIN_{(\cAr_i, \cBr_j, \cCr_k) \in \romset} \quad
  & \obj\myparen*{\cAr_i, \cBr_j, \cCr_k}
    = \normLtwomu{\yf - \yr}^2.
\end{align*}
In our analysis below,
we also employ the reduced-order \emph{dual state} $\xrd(\pp)$,
satisfying the reduced-order dual state equation
$\Ar(\pp)\herm \xrd(\pp) = \Cr(\pp)\herm$~\cite{FenB19},
where $(\dotvar)\herm$ denotes the conjugate transpose of a matrix.

Before establishing the gradients of the objective function $\obj$ with respect
to the \ac{ddrom} matrices, we recall some notation.
For a Fr\'echet differentiable function $\fundef{f}{U}{\RR}$,
defined on an open subset $U$ of a Hilbert space $H$ with inner product
$\ip{\dotvar}{\dotvar}$,
the gradient of $f$ at $x$,
denoted $\nabla f(x)$,
is the unique element of $H$ satisfying
$f(x + h) = f(x) + \ip{\nabla f(x)}{h} + o(\norm{h})$,
where $g(h) = o(\norm{h})$ denotes that $\lim_{h \to 0} g(h)/\norm{h} = 0$.
For a multivariate function $f(x_1, x_2, \ldots, x_k)$,
partial gradients $\nabla_{x_i} f(x_1, x_2, \ldots, x_k)$ are defined in a
similar way.
\begin{theorem}\label{thm:gradients}
  Let $\pset$, $\measure$, and $\yf$ satisfy \Cref{assumption:fom} and
  $(\cAr_i, \cBr_j, \cCr_k)$ be a tuple of \ac{ddrom} matrices belonging to
  $\romset$~\eqref{eq:setR}.
  Then, the gradients of $\obj$ with respect to the \ac{ddrom} matrices are
  \begin{align*}
    \nabla_{\cAr_i} \obj = {}
    &
      2 \int_{\pset} \car_i(\overline{\pp})
      \xrd(\pp)
      \mybrack*{\yf(\pp) - \yr(\pp)}
      \xr(\pp)\herm
      \difm{\pp},
    & i = 1, 2, \ldots, \qAr, \\*
    \nabla_{\cBr_j} \obj = {}
    &
      2 \int_{\pset} \cbr_j(\overline{\pp})
      \xrd(\pp)
      \mybrack*{\yr(\pp) - \yf(\pp)}
      \difm{\pp},
    & j = 1, 2, \ldots, \qBr, \\*
    \nabla_{\cCr_k} \obj = {}
    &
      2 \int_{\pset} \ccr_k(\overline{\pp})
      \mybrack*{\yr(\pp) - \yf(\pp)}
      \xr(\pp)\herm
      \difm{\pp},
    & k = 1, 2, \ldots, \qCr.
  \end{align*}
\end{theorem}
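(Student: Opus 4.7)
\medskip
\noindent\textbf{Proof proposal.}
The plan is to compute the Fr\'echet derivative of $\obj$ by differentiating the pointwise squared error $\normF{\yf(\pp) - \yr(\pp)}^2$ under the integral sign, and then using the conjugation symmetry granted by \Cref{assumption:fom} to rewrite the resulting integrals in the stated form. First I would observe that, thanks to \Cref{lem:rom-set-open-yr-bounded}, $\romset$ is open and $\yr$ is square-integrable, so $\obj$ is finite on $\romset$ and we may perturb a given $(\cAr_i, \cBr_j, \cCr_k) \in \romset$ by a direction $(H_i, K_j, L_k)$ while remaining in $\romset$. The key pointwise identity is the matrix-inverse differentiation rule: if $\cAr_i$ is perturbed by $H_i$ then $\Ar(\pp)$ is perturbed by $\car_i(\pp) H_i$, and consequently
\begin{equation*}
  \delta\yr(\pp) = -\car_i(\pp)\, \xrd(\pp)\herm\, H_i\, \xr(\pp),
\end{equation*}
using $\Cr(\pp)\Ar(\pp)^{-1} = \xrd(\pp)\herm$ and $\Ar(\pp)^{-1}\Br(\pp) = \xr(\pp)$. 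The analogous (first-order) pointwise variations for $\cBr_j$ and $\cCr_k$ are $\delta\yr(\pp) = \cbr_j(\pp)\,\xrd(\pp)\herm K_j$ and $\delta\yr(\pp) = \ccr_k(\pp)\,L_k\,\xr(\pp)$, respectively.

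Next, using $\normF{M}^2 = \trace{M M\herm}$ gives the pointwise variation $-2\Real\trace{(\yf-\yr)\herm \delta\yr}$. Substituting the expressions above and applying the cyclic property of the trace leads, for the $\cAr_i$ direction, to an integrand of the form $2\Real[\car_i(\pp)\trace{H_i\, Q(\pp)}]$ with $Q(\pp) = \xr(\pp)(\yf(\pp)-\yr(\pp))\herm \xrd(\pp)\herm$. To justify differentiating under the integral, I would produce an integrable majorant: the condition defining $\romset$ essentially bounds $\car_i(\pp)\Ar(\pp)^{-1}$ in $\Linf$, while \Cref{assumption:scalar-funcs} together with the square-integrability of $\yf$ and $\yr$ controls the remaining factors uniformly for small perturbations; a standard dominated-convergence argument then gives Fr\'echet differentiability with the derivative obtained by pointwise differentiation.

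The most delicate step is the final symmetrization, which both removes the real-part operator and produces the form stated in the theorem. Using the conjugation invariance of $\pset$, $\Sigma$, $\measure$, and $\yf$, together with the fact that the \ac{ddrom} matrices are real and the $\car_i, \cbr_j, \ccr_k$ are closed under conjugation, one checks that $\xr(\overline\pp) = \overline{\xr(\pp)}$, $\xrd(\overline\pp) = \overline{\xrd(\pp)}$, and $\yr(\overline\pp) = \overline{\yr(\pp)}$. Consequently the integrand $\car_i(\pp)\trace{H_i\, Q(\pp)}$ and its conjugate are exchanged by the measure-preserving change of variable $\pp \mapsto \overline\pp$, so the integral is real; equivalently, the same change of variable lets me replace $Q(\pp)$ by $Q(\pp)\herm$ and $\car_i(\pp)$ by $\car_i(\overline\pp)$ without altering the integral. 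Identifying the gradient through the real Frobenius pairing $\ip{G}{H}_F = \trace{G\tran H}$, the transpose-conjugate bookkeeping delivers
\begin{equation*}
  \nabla_{\cAr_i}\obj = 2\int_{\pset} \car_i(\overline\pp)\, \xrd(\pp)\,[\yf(\pp)-\yr(\pp)]\,\xr(\pp)\herm \difm{\pp},
\end{equation*}
and the same template handles $\nabla_{\cBr_j}\obj$ and $\nabla_{\cCr_k}\obj$, the only differences being the absence of one of the factors $\xr$ or $\xrd\herm$ and an overall sign that flips $\yf - \yr$ into $\yr - \yf$.

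The main obstacle I anticipate is the bookkeeping in the last step: keeping track of conjugate transposes versus transposes, ensuring that the expression obtained from the Fr\'echet derivative against a \emph{real} perturbation genuinely yields a real matrix, and verifying that the $\pp\mapsto\overline\pp$ change of variable produces exactly the claimed integrand (rather than, e.g., a transpose of it). The integrability/dominated-convergence step is technical but routine given \Cref{lem:rom-set-open-yr-bounded} and the $\romset$ bound~\eqref{eq:setR}; the algebraic manipulation of the symmetrization is where careful attention is needed.
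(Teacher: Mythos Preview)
Your proposal is correct and follows essentially the same route as the paper: both compute the Fr\'echet derivative by expanding the perturbed objective, identifying the linear term, and bounding the remainder using the $\Linf$ control of $\car_i(\pp)\Ar(\pp)^{-1}$ from~\eqref{eq:setR} together with the square-integrability from \Cref{assumption:scalar-funcs} and \Cref{lem:rom-set-open-yr-bounded}. The only organizational difference is that the paper first expands $\obj = \normLtwomu{\yf}^2 - 2\ipLtwomu{\yf}{\yr} + \normLtwomu{\yr}^2$ and uses conjugation symmetry \emph{upfront} to conclude $\ipLtwomu{\yf}{\yr}\in\RR$, so no $\Real$ operator ever appears; it then handles the $\cAr_i$ perturbation by an explicit Neumann series and bounds the tail directly to obtain the $o(\normF{\Delta\cAr_i})$ estimate, rather than phrasing it as dominated convergence. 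Your approach of keeping $\normF{\yf-\yr}^2$ intact and removing the $\Real$ at the end via the measure-preserving change of variable $\pp\mapsto\overline\pp$ is equivalent and arguably makes the role of the conjugation hypotheses more transparent, at the cost of the extra transpose/conjugate bookkeeping you flagged.
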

\begin{proof}
  The expressions follow from the use of the definition of the gradient and
  using the assumptions in \Cref{assumption:fom},
  \Cref{assumption:scalar-funcs}, \Cref{def:rom-set} and
  the result of \Cref{lem:rom-set-open-yr-bounded} to show differentiability.
  The full proof is given in \Cref{sec:proofstheorem}.
\end{proof}

Note that these gradient computations do not require access to the full-order
matrices or the full-order state.
They are computed directly from the evaluations of the output $y(\pp)$ of the
\ac{fom}.
This allows us to develop
a non-intrusive, data-driven, optimization-based, reduced-order modeling
algorithm that only needs access to the output $\yf(\pp)$ of the \ac{fom}.
With data-driven access to these gradient evaluations,
we can design a variety of optimization algorithms to construct an
$\Ltwo$-optimal \ac{ddrom} for different scenarios.
We discuss these details in the next subsection.
\begin{remark}\label{rem:future-work}
  The $\Ltwo$ norm in~\eqref{eq:l2error} recovers both the $\Htwo$
  norm for non-parametric \ac{lti} systems (see \Cref{ex:fom}) and
  the $\HtwoLtwo$ norm for parametric \ac{lti} systems
  by the appropriate choice of the parameter space $\pset$ and measure
  $\measure$.
  In particular, \Cref{thm:gradients} has implications for interpolatory optimal
  reduced-order modeling of dynamical systems~\cite{AntBG20,GugAB08,HunMMS21}
  and unifies optimal interpolation conditions for $\Htwo$-optimal and
  $\HtwoLtwo$-optimal model order reduction of (parametric) \ac{lti} systems.
  These details together with interpolatory optimality
  conditions for approximating parametric stationary problems as
  in~\eqref{eq:stationary} can be found in~\cite{MliG22b}.
\end{remark}

\subsection{Algorithmic Details}
In this section, we describe our proposed algorithm for $\Ltwo$-optimal
reduced-order modeling using parameter-separable form ($\Ltwo$-Opt-PSF).
The pseudocode is given in \Cref{alg:l2opt}.
As discussed earlier, from the \ac{fom},
we only need output evaluations or samples, i.e., only $y(\pp)$ is needed.
$\Ltwo$-Opt-PSF does not require access to the internal state variables or
full-order operators.

\begin{algorithm}
  \caption{$\Ltwo$-Opt-PSF}%
  \label{alg:l2opt}
  \begin{algorithmic}[1]
    \Require%
      Parameter-to-output mapping $\yf$,
      initial guess for \iac{ddrom} $(\cAr_i, \cBr_j, \cCr_k)$,
      maximum number of iterations $\mathtt{maxit}$,
      tolerance $\mathtt{tol} > 0$.
    \Ensure%
    \Ac{ddrom} $(\cAr_i, \cBr_j, \cCr_k)$.
    \State%
      Set $\yr^{(0)}$ as the output of the initial \ac{ddrom}.
    \For{$i$ in $1, 2, \ldots, \mathtt{maxit}$}
      \State%
        \parbox[t]{\linewidth-\algorithmicindent}{%
          Compute a new \ac{ddrom} with output $\yr^{(i)}$
          using a step of a gradient-based optimization method,
          with the squared $\Ltwo$ error~\eqref{eq:l2error} as the objective
          function and gradients based on \Cref{thm:gradients}.%
        }\label{line:l2opt-grad}
      \If{$\normLtwomu{\yr^{(i - 1)} - \yr^{(i)}} / \normLtwomu{\yr^{(i)}}
        \le \mathtt{tol}$}%
        \label{line:l2opt-stop}
      \State%
        Exit the \textbf{for} loop.
      \EndIf%
    \EndFor%
    \State%
      Return the last computed \ac{ddrom}.
  \end{algorithmic}
\end{algorithm}

Some comments on the pseudocode are in order.
First, the choice of the initial guess has an impact on the final result,
as with any other non-convex optimization problem.
Second, we do not specify the gradient-based optimization method used in
Step~\ref{line:l2opt-grad}.
Third, the computations of the objective function and its gradient are not
explicitly specified since they depend on the problem at hand.
We discuss these issues in more detail in \Cref{sec:numerics-cont}.
Next, in Line~\ref{line:l2opt-stop},
we use the relative change in the $\Ltwo$ output error as the stopping
criterion, as it only depends on the reduced quantities.
However, one can easily incorporate more sophisticated stopping criteria if
desired.
Finally, the pseudocode does not check for the invertibility of $\Ar(\pp)$.
In our experiments, the obtained \acp{ddrom} are well-conditioned,
which could be explained by the objective function increasing when
$\normF{\Ar(\pp)^{-1}}$ is large.

The computational complexity of the \Cref{alg:l2opt} depends on the size of the
\ac{ddrom};
more specifically on the number of unknowns,
which is $\qAr \nrom^2 + \qBr \nrom \nin + \qCr \nout \nrom$.
Note that this is linear in
the number of terms in the parameter-separable forms,
the number of forcings, and
the number of outputs, but
quadratic in the reduced order.

\section{Numerical Examples for the Continuous \texorpdfstring{$\Ltwo$}{L2}
  Norm}%
\label{sec:numerics-cont}
Here we present numerical experiments demonstrating the performance of
$\Ltwo$-Opt-PSF (\Cref{alg:l2opt}) when $\measure$ is the Lebesgue measure
(thus, a continuous least-squares problem).

In particular, we focus on the case of a one-dimensional parameter,
i.e., $\npar = 1$,
where we use \texttt{quad} and \texttt{quad\_vec} from \texttt{scipy.integrate}
(from SciPy~\cite{VirGOetal20}),
methods for numerical integration of scalar and vector-valued
functions, respectively,
to evaluate $\obj$~\eqref{eq:l2error} and
its gradients (see \Cref{thm:gradients}).
For the gradient-based optimization method in \Cref{alg:l2opt},
we chose Broyden-Fletcher-Goldfarb-Shanno~(BFGS)~\cite{NocW99} algorithm.
In \Cref{alg:l2opt} we set $\mathtt{maxit} = 1000$ and $\mathtt{tol} = 10^{-6}$
for all examples.
For evaluating $\Linf$ errors,
we use \texttt{scipy.optimize.shgo} (from SciPy~\cite{VirGOetal20}),
a method for global minimization.

We compare \ac{rb}, \ac{pod}, and $\Ltwo$-Opt-PSF.\
Given a training set $\pset_{\textnormal{train}} \subseteq \pset$ and \iac{fom},
\Ac{rb} constructs the \ac{rom} via Galerkin projection,
where the projection basis is built in a greedy manner to reduce the $\Linf$
error.
In particular, we chose to use the strong greedy version given in \Cref{alg:rb},
where the error $\normF{\yf(\pp) - \yr(\pp)}$ is used instead of an error
estimator.
There are different error estimators proposed in the literature
(see, e.g.,~\cite{HesRS16,QuaMN16,FenB19,CheJN19}),
But our focus here is on minimizing the error and less on an efficient
implementation.
\begin{algorithm}
  \caption{Strong Greedy Algorithm}\label{alg:rb}
  \begin{algorithmic}[1]
    \Require%
      \Ac{fom} $(\Af, \Bf, \Cf)$~\eqref{eq:fom} in parameter-separable
      form~\eqref{eq:fom-par-sep-form},
      finite training set $\pset_{\textnormal{train}} \subseteq \pset$,
      maximum reduced order $\nrom_{\max}$,
      tolerance $\mathtt{tol} > 0$.
    \Ensure%
      \Ac{rom} $(\Ar, \Br, \Cr)$.
    \State%
      $V = [\,]$.
    \For{$i$ in $1, 2, \ldots, \nrom_{\max}$}
      \State%
        Find $\pp_{\max} \in \pset_{\textnormal{train}}$ that maximizes the
        error $e(\pp) = \normF{\yf(\pp) - \yr(\pp)}$.
      \If{$e(\pp_{\max}) \le \mathtt{tol}$}
        \State%
          Exit the \textbf{for} loop.
      \EndIf%
      \State%
        Set $V = \begin{bmatrix} V & \xf(\pp_{\max}) \end{bmatrix}$ and
        orthonormalize it.
      \State%
        Form \iac{rom} $(\Ar, \Br, \Cr)$ with $\cAr_i, \cBr_j, \cCr_k$ as
        in~\eqref{eq:galerkin}.
    \EndFor%
    \State%
      Return $(\Ar, \Br, \Cr)$.
  \end{algorithmic}
\end{algorithm}
\Ac{pod}~\cite{BenGQetal20b} is also based on a Galerkin projection and
tries to find a subspace that approximates the solution set
$\{\xf(\pp) : \pp \in \pset\} \subseteq \CCf$ in the $\Ltwo$-optimal sense
given a training set.
The pseudocode for \ac{pod} is given in \Cref{alg:pod}.
\begin{algorithm}[tb]
  \caption{Proper Orthogonal Decomposition}\label{alg:pod}
  \begin{algorithmic}[1]
    \Require%
      \Ac{fom} $(\Af, \Bf, \Cf)$~\eqref{eq:fom} in parameter-separable
      form~\eqref{eq:fom-par-sep-form},
      finite training set $\pset_{\textnormal{train}} \subseteq \pset$,
      reduced order $\nrom$.
    \Ensure%
      \Ac{rom} $(\Ar, \Br, \Cr)$.
    \State%
      $X =
      \begin{bmatrix}
        \xf(\pp_1) & \xf(\pp_2) & \cdots
        & \xf(\pp_{\card{\pset_{\textnormal{train}}}})
      \end{bmatrix}$.
    \State%
      Compute the singular value decomposition $X = U \Sigma W\tran$.
    \State%
      Set $V$ as the first $\nrom$ columns of $U$.
    \State%
      Form \iac{rom} $(\Ar, \Br, \Cr)$ with $\cAr_i, \cBr_j, \cCr_k$ as
      in~\eqref{eq:galerkin}.
    \State%
      Return $(\Ar, \Br, \Cr)$.
  \end{algorithmic}
\end{algorithm}

We focus on stationary parametric \acp{pde} as numerical examples.
As mentioned in~\Cref{rem:future-work},
the various implications of our approach for the dynamical systems case,
specifically for $\Htwo$ and $\HtwoLtwo$-optimal model order reduction of
\ac{lti} systems,
are presented in detail in~\cite{MliG22b}.
In \Cref{sec:numerics-disc},
where we consider a discrete $\Ltwo$ norm,
we include \iac{lti} example.

In the following numerical examples,
the parameter space $\pset$ is a segment $[a, b] \subset \RR$ and
we chose $\pset_{\textnormal{train}} = \mathtt{linspace}(a, b, 100)$ for \ac{rb}
and \ac{pod}
where \texttt{linspace} refers to the NumPy (\cite{HarMWetal20}) method
\texttt{numpy.linspace} returning a vector of equidistant points in $[a, b]$
including the boundaries.
Although we describe the \acp{fom} we use in every example,
we note that $\Ltwo$-Opt-PSF only needs access to the output of the \ac{fom} and
not its state or matrices.
The \ac{fom} description is given since \ac{rb} and \ac{pod} are
projection-based and work with the full-order operators.

\begin{mdframed}[
    skipabove=1ex,
    skipbelow=1ex,
    innerleftmargin=1ex,
    innerrightmargin=1ex,
    innertopmargin=1ex,
    innerbottommargin=1ex,
  ]
  \textbf{Code Availability:}
  The source code used to compute the presented results can be obtained
  from~\cite{Mli22}.
  The code was written in the Python programming language using
  pyMOR~\cite{MilRS16}.
\end{mdframed}

\subsection{Poisson Equation}
We consider the Poisson equation over the unit square $\Omega = {(0, 1)}^2$ with
homogeneous Dirichlet boundary conditions:
\begin{subequations}\label{eq:poisson}
\begin{align}
  -\nabla \cdot (d(\xi, \pp) \nabla \xf(\xi, \pp)) & = 1,
  & \xi \in \Omega, \\*
  \xf(\xi, \pp) & = 0,
  & \xi \in \partial \Omega,
\end{align}
\end{subequations}
where
$d(\xi, \pp) = \xi_1 + \pp (1 - \xi_1)$ and
$\pset = [0.1, 10]$.
After a finite element discretization,
we obtain \iac{fom} of the form~\eqref{eq:stationary}
with $\nfom = 1089$ and $\nin = 1$.
For the output,
we chose $\cCf = \cBf\tran$ (thus, $\nout = 1$).
Solutions for a few parameter values are given in \Cref{fig:poisson-solutions}.
The output can be seen in the left plot in \Cref{fig:poisson-outputs}.
\begin{figure}[tb]
  \centering
  \begin{minipage}{0.3\linewidth}
    \centering
    \includegraphics[width=\linewidth]{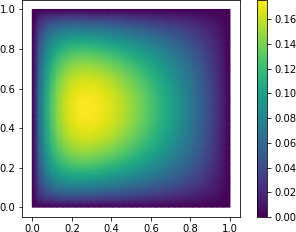} \\
    {\footnotesize $\pp = 0.1$}
  \end{minipage}
  \hfill
  \begin{minipage}{0.3\linewidth}
    \centering
    \includegraphics[width=\linewidth]{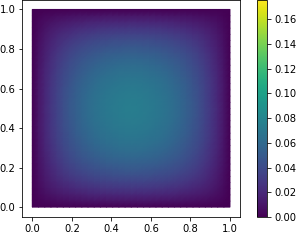} \\
    {\footnotesize $\pp = 1$}
  \end{minipage}
  \hfill
  \begin{minipage}{0.3\linewidth}
    \centering
    \includegraphics[width=\linewidth]{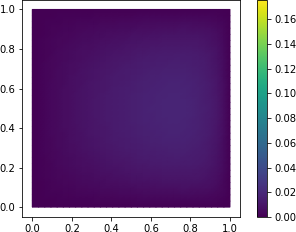} \\
    {\footnotesize $\pp = 10$}
  \end{minipage}
  \caption{Poisson example \ac{fom} solutions for different parameter values}%
  \label{fig:poisson-solutions}
\end{figure}
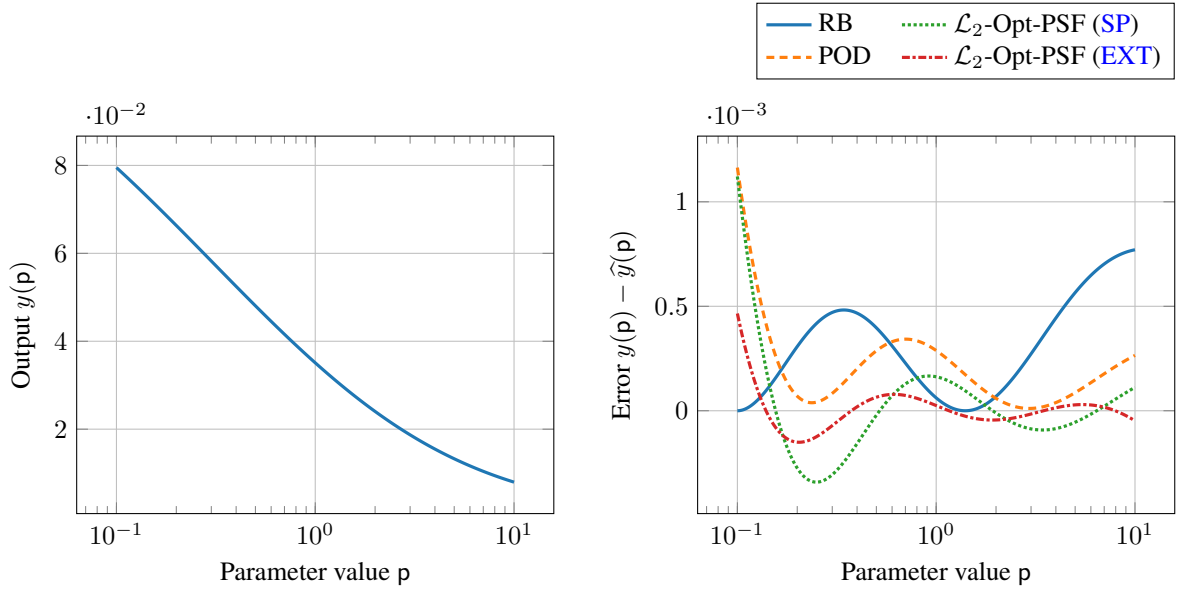
\begin{figure}[tb]
  \centering
  \tikzsetnextfilename{poisson-outputs}
  \begin{tikzpicture}
    \begin{axis}[
        width=0.48\linewidth,
        height=0.4\linewidth,
        xlabel={Parameter value $\pp$},
        ylabel={Output $\yf(\pp)$},
        xmode=log,
        grid=major,
        cycle list name=mpl,
      ]
      \addplot table [x=p, y=y] {poisson_output.txt};
    \end{axis}
    \begin{axis}[
        at={(0.5\linewidth, 0)},
        width=0.48\linewidth,
        height=0.4\linewidth,
        xlabel={Parameter value $\pp$},
        ylabel={Error $\yf(\pp) - \yr(\pp)$},
        xmode=log,
        grid=major,
        legend entries={RB, POD, $\Ltwo$-Opt-PSF~\eqref{eq:poisson-rom-sp},
          $\Ltwo$-Opt-PSF~\eqref{eq:poisson-rom-ext}},
        legend columns=2,
        legend style={
          at={(1, 1.15)},
          anchor=south east,
          /tikz/every even column/.append style={column sep=2ex},
        },
        legend cell align={left},
        transpose legend,
        cycle list name=mpl,
      ]
      \addplot table [x=p, y=rb] {poisson_output_errors.txt};
      \addplot table [x=p, y=pod] {poisson_output_errors.txt};
      \addplot table [x=p, y=l2] {poisson_output_errors.txt};
      \addplot table [x=p, y=l2ext] {poisson_output_errors.txt};
    \end{axis}
  \end{tikzpicture}
  \caption{Poisson example \ac{fom} output and
    pointwise output errors for \acp{rom} of order $2$}%
  \label{fig:poisson-outputs}
\end{figure}

For the \acp{rom}, we consider a structure-preserving (physics-inspired)
\ac{ddrom}
\begin{equation}\tag{SP}\label{eq:poisson-rom-sp}
  \begin{aligned}
    \myparen*{\cAr_1 + \pp \cAr_2} \xr(\pp) & = \cBr, \\*
    \yr(\pp) & = \cCr \xr(\pp),
  \end{aligned}
\end{equation}
and an extended version
\begin{equation}\tag{EXT}\label{eq:poisson-rom-ext}
  \begin{aligned}
    \myparen*{\cAr_1 + \pp \cAr_2} \xr(\pp) & = \cBr_1 + \pp \cBr_2, \\*
    \yr(\pp) & = \myparen*{\cCr_1 + \pp \cCr_2} \xr(\pp).
  \end{aligned}
\end{equation}
We include the extended version to highlight that the proposed approach offers
flexibility to include additional structures that are not necessarily present in
the \ac{fom}
(further emphasizing that the \ac{fom} is not needed,
but only the output samples).
We also note that~\eqref{eq:poisson-rom-ext} cannot be obtained via a
state-independent linear projection.

We choose the reduced order $\nrom = 2$.
The right plot in \Cref{fig:poisson-outputs} shows the output errors,
resulting from
\ac{rb},
\ac{pod},
$\Ltwo$-Opt-PSF with structure preservation
(initialized using \ac{pod}),
and $\Ltwo$-Opt-PSF with extended form
(initialized using the result of structure-preserving $\Ltwo$-Opt-PSF).
The relative $\Ltwo$ errors are, respectively,
$2.5577 \times 10^{-2}$,
$8.2483 \times 10^{-3}$,
$4.3826 \times 10^{-3}$, and
$1.6468 \times 10^{-3}$,
illustrating that $\Ltwo$-Opt-PSF produces the smallest $\Ltwo$ error.
The relative $\Linf$ errors are, respectively,
$9.6919 \times 10^{-3}$,
$1.4639 \times 10^{-2}$,
$1.4104 \times 10^{-2}$, and
$5.8541 \times 10^{-3}$.
We observe that \ac{rb} and \ac{pod} produce \acp{rom} with nonnegative output
error.
This is a general property of Galerkin projection applied to systems with
symmetric positive definite $\Af(\pp)$ and $\Bf(\pp) = \Cf(\pp)\tran$
(see, e.g., Theorem~1.4 in~\cite{BenGQetal20b}).
We also observe that even though $\Ltwo$-Opt-PSF preserved the symmetry
properties in the \acp{ddrom}, without enforcing it explicitly,
it did not give \acp{ddrom} that has nonnegative output error.
This implies that $\Ltwo$-Opt-PSF \acp{ddrom} are not based on Galerkin
projection.
An explanation for why symmetry is preserved in $\Ltwo$-Opt-PSF without
explicitly enforcing it is that the gradients of the objective function
(\Cref{thm:gradients}) have the same symmetry properties.

The relative $\Ltwo$ and $\Linf$ errors are shown in
\Cref{fig:poisson-output-rom-errors} for a range of reduced orders.
We observe that $\Ltwo$-Opt-PSF~\eqref{eq:poisson-rom-sp} and
$\Ltwo$-Opt-PSF~\eqref{eq:poisson-rom-ext} have consistently lower $\Ltwo$ error
compared to \ac{pod} and \ac{rb}.
Moreover, $\Ltwo$-Opt-PSF~\eqref{eq:poisson-rom-sp} and
$\Ltwo$-Opt-PSF~\eqref{eq:poisson-rom-ext} have comparable $\Linf$ errors to
(and for some $\nrom$ values even smaller than)
\ac{rb} and \ac{pod} even though not optimized for this error measure.
\begin{figure}[tb]
  \centering
  \tikzsetnextfilename{poisson-rom-errors}
  \begin{tikzpicture}
    \begin{semilogyaxis}[
        width=0.48\linewidth,
        height=0.4\linewidth,
        xlabel={Reduced order},
        ylabel={Relative $\Ltwo$ error},
        grid=major,
        legend entries={RB, POD, $\Ltwo$-Opt-PSF~\eqref{eq:poisson-rom-sp},
          $\Ltwo$-Opt-PSF~\eqref{eq:poisson-rom-ext}},
        legend columns=-1,
        legend style={
          at={(1.15, 1.1)},
          anchor=south,
          /tikz/every even column/.append style={column sep=2ex},
        },
        legend cell align={left},
        xtick={1, 2, 3, 4, 5},
        cycle list name=mplmark,
      ]
      \addplot table [x=r, y=rb] {poisson_output_rom_l2_errors.txt};
      \addplot table [x=r, y=pod] {poisson_output_rom_l2_errors.txt};
      \addplot table [x=r, y=l2] {poisson_output_rom_l2_errors.txt};
      \addplot table [x=r, y=l2ext] {poisson_output_rom_l2_errors.txt};
    \end{semilogyaxis}
    \begin{semilogyaxis}[
        at={(0.5\linewidth, 0)},
        width=0.48\linewidth,
        height=0.4\linewidth,
        xlabel={Reduced order},
        ylabel={Relative $\Linf$ error},
        grid=major,
        xtick={1, 2, 3, 4, 5},
        cycle list name=mplmark,
      ]
      \addplot table [x=r, y=rb] {poisson_output_rom_linf_errors.txt};
      \addplot table [x=r, y=pod] {poisson_output_rom_linf_errors.txt};
      \addplot table [x=r, y=l2] {poisson_output_rom_linf_errors.txt};
      \addplot table [x=r, y=l2ext] {poisson_output_rom_linf_errors.txt};
    \end{semilogyaxis}
  \end{tikzpicture}
  \caption{Poisson example errors for \acp{rom} of different orders}%
  \label{fig:poisson-output-rom-errors}
\end{figure}
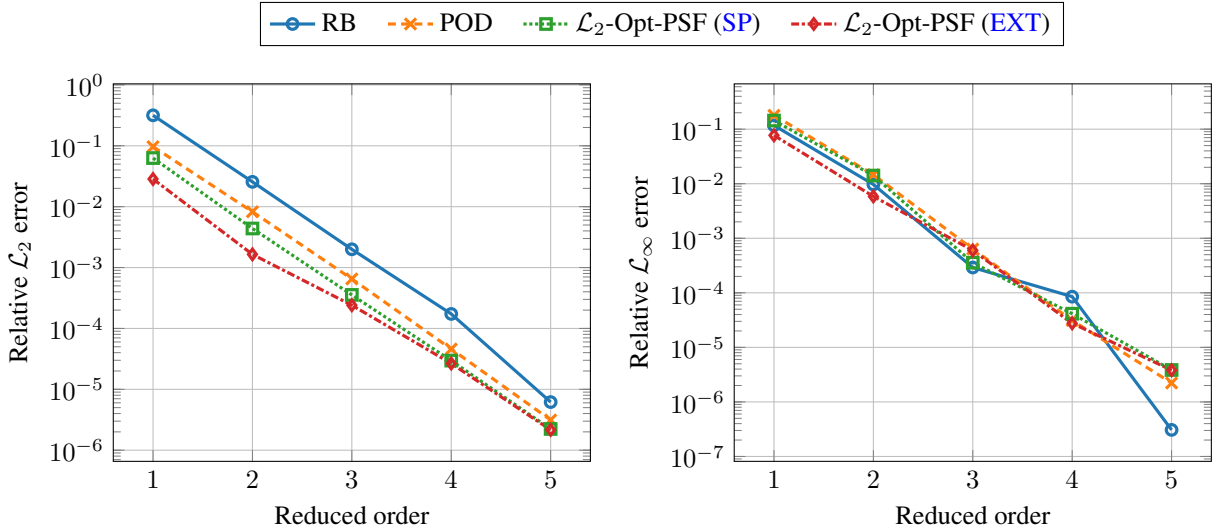

\subsection{Non-separable Example}
We modify the Poisson equation in~\eqref{eq:poisson} by using a diffusion term
that is not parameter-separable.
Specifically, we take
\begin{equation*}
  d(\xi, \pp)
  = 1 - \frac{9}{10} e^{-5 \myparen*{{(\xi_1 - \pp)}^2 + {(\xi_2 - \pp)}^2}}
\end{equation*}
and set
$\pset = [0, 1]$.
After a finite element discretization,
the \ac{fom} is of the form
\begin{align*}
  \Af(\pp) \xf(\pp) & = \cBf, \\*
  \yf(\pp) & = \cCf \xf(\pp),
\end{align*}
with $\nfom = 1089$ and $\nin = 1$, and
$\Af(\pp)$ needs to be assembled for every new parameter value $\pp$.
For the output,
we again chose $\cCf = \cBf\tran$ ($\nout = 1$).
Solutions for a few parameter values are given in \Cref{fig:nonsep-solutions}.
The output can be seen in the left plot in \Cref{fig:nonsep-outputs}.
\begin{figure}[tb]
  \centering
  \begin{minipage}{0.3\linewidth}
    \centering
    \includegraphics[width=\linewidth]{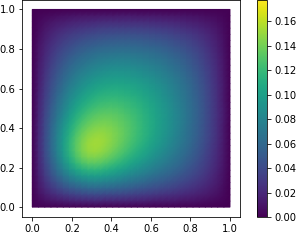} \\
    {\footnotesize $\pp = 0.25$}
  \end{minipage}
  \hfill
  \begin{minipage}{0.3\linewidth}
    \centering
    \includegraphics[width=\linewidth]{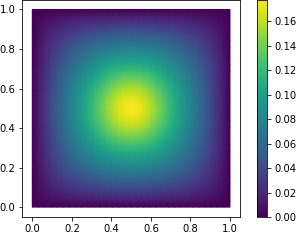} \\
    {\footnotesize $\pp = 0.5$}
  \end{minipage}
  \hfill
  \begin{minipage}{0.3\linewidth}
    \centering
    \includegraphics[width=\linewidth]{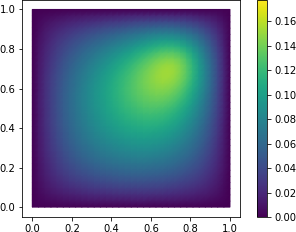} \\
    {\footnotesize $\pp = 0.75$}
  \end{minipage}
  \caption{Poisson example \ac{fom} solutions for different parameter values}%
  \label{fig:nonsep-solutions}
\end{figure}
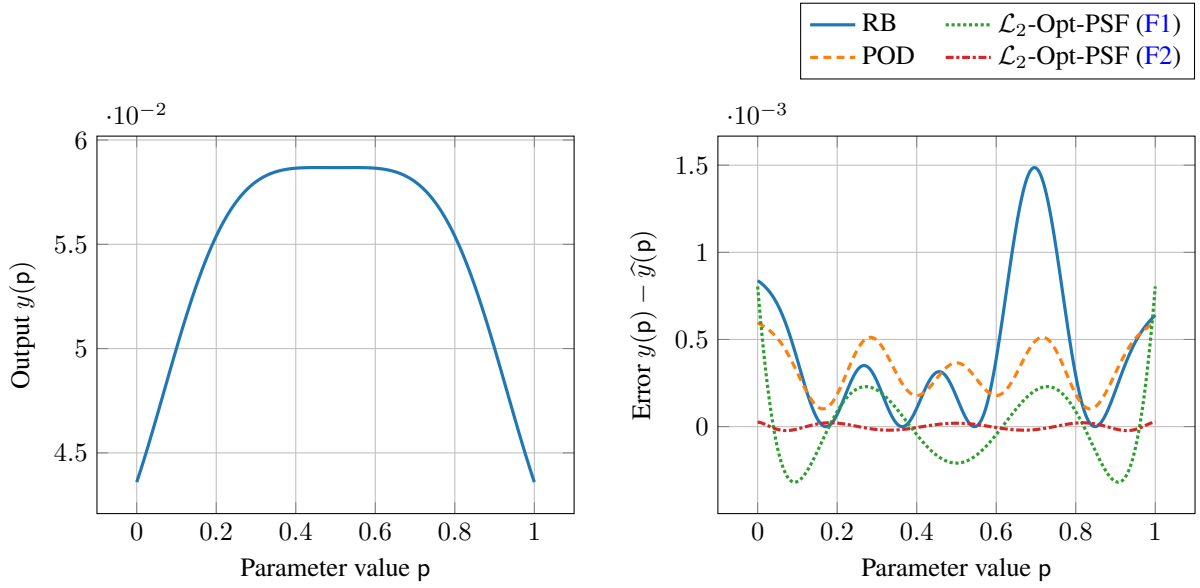
\begin{figure}[tb]
  \centering
  \tikzsetnextfilename{nonsep-outputs}
  \begin{tikzpicture}
    \begin{axis}[
        width=0.48\linewidth,
        height=0.4\linewidth,
        xlabel={Parameter value $\pp$},
        ylabel={Output $\yf(\pp)$},
        grid=major,
        cycle list name=mpl,
      ]
      \addplot table [x=p, y=y] {nonsep_output.txt};
    \end{axis}
    \begin{axis}[
        at={(0.5\linewidth, 0)},
        width=0.48\linewidth,
        height=0.4\linewidth,
        xlabel={Parameter value $\pp$},
        ylabel={Error $\yf(\pp) - \yr(\pp)$},
        grid=major,
        legend entries={RB, POD, $\Ltwo$-Opt-PSF~\eqref{eq:nonsep-rom-1},
          $\Ltwo$-Opt-PSF~\eqref{eq:nonsep-rom-2}},
        legend columns=2,
        legend style={
          at={(1, 1.15)},
          anchor=south east,
          /tikz/every even column/.append style={column sep=2ex},
        },
        legend cell align={left},
        transpose legend,
        cycle list name=mpl,
      ]
      \addplot table [x=p, y=rb] {nonsep_errors.txt};
      \addplot table [x=p, y=pod] {nonsep_errors.txt};
      \addplot table [x=p, y=l2] {nonsep_errors.txt};
      \addplot table [x=p, y=l2ext] {nonsep_errors.txt};
    \end{axis}
  \end{tikzpicture}
  \caption{Non-separable example \ac{fom} output and
    pointwise output errors for \acp{rom} of order $2$}%
  \label{fig:nonsep-outputs}
\end{figure}
Using \ac{rb} or \ac{pod} produces \acp{rom} of the form
\begin{align*}
  V\tran \Af(\pp) V \xr(\pp) & = V\tran \cBf, \\*
  \yr(\pp) & = \cCf V \xr(\pp).
\end{align*}
Notably, an efficient computation of $V\tran \Af(\pp) V$ requires a further
approximation of $\Af(\pp)$ in a parameter-separable form, e.g.,
using the empirical interpolation method~\cite{BarMNP04}
as mentioned in~\Cref{sec:preliminaries};
for details, see, e.g.,~\cite{BenGW15}.
However, in order not to
degrade the accuracy of \ac{rb} and \ac{pod} models, we skip that step.
For the $\Ltwo$-optimal \acp{ddrom}, we consider two forms:
\begin{equation}\tag{F1}\label{eq:nonsep-rom-1}
  \begin{aligned}
    \myparen*{
    \cAr_1
    + \myparen*{\pp - \tfrac{1}{2}}^2 \cAr_2
    + \myparen*{\pp - \tfrac{1}{2}}^4 \cAr_3
    } \xr(\pp)
    & =
      \cBr, \\*
    \yr(\pp)
    & =
      \cCr \xr(\pp),
  \end{aligned}
\end{equation}
and
\begin{equation}\tag{F2}\label{eq:nonsep-rom-2}
  \begin{aligned}
    & \hspace{-3em}
    \myparen*{
      \cAr_1
      + \myparen*{\pp - \tfrac{1}{2}}^2 \cAr_2
      + \myparen*{\pp - \tfrac{1}{2}}^4 \cAr_3
      + \myparen*{\pp - \tfrac{1}{2}}^6 \cAr_4
    } \xr(\pp) \\*
    & =
      \cBr_1
      + \myparen*{\pp - \tfrac{1}{2}}^2 \cBr_2
      + \myparen*{\pp - \tfrac{1}{2}}^4 \cBr_3
      + \myparen*{\pp - \tfrac{1}{2}}^6 \cBr_4, \\*
    \yr(\pp)
    & =
      \myparen*{
        \cCr_1
        + \myparen*{\pp - \tfrac{1}{2}}^2 \cCr_2
        + \myparen*{\pp - \tfrac{1}{2}}^4 \cCr_3
        + \myparen*{\pp - \tfrac{1}{2}}^6 \cCr_4
      } \xr(\pp),
  \end{aligned}
\end{equation}
which exploit the symmetry in $\yf(\pp)$ around $\pp = \frac{1}{2}$.
The right plot in \Cref{fig:nonsep-outputs} shows the output errors of
\acp{rom} of order $4$,
resulting from
\ac{rb},
\ac{pod}, and
both $\Ltwo$-Opt-PSF models
(both initialized with
$\cAr_1 = I$,
$\cBr_1 = \mathbf{1}$,
$\cCr_1 = \mathbf{1}\tran$, and
$\cAr_i = 0$,
$\cBr_i = 0$, and
$\cCr_i = 0$
for $i \ge 2$,
where $\mathbf{1}$ is the vector of all ones).
We observe that the \ac{rb} and \ac{pod} again produce \acp{rom} with
nonnegative output error.
The relative $\Ltwo$ and $\Linf$ errors listed in \Cref{tab:nonsep-errors}
show significant improvements in $\Ltwo$ error minimization via $\Ltwo$-Opt-PSF,
especially for the second \ac{ddrom} form.
\begin{table}[tb]
  \centering
  \caption{Relative errors for the non-separable example}%
  \label{tab:nonsep-errors}
  \begin{tabular}{ccccc}
    Error measure & RB & POD & $\Ltwo$-Opt-PSF 1 & $\Ltwo$-Opt-PSF 2 \\
    \midrule
    $\Ltwo$
                  & $1.0441 \times 10^{-2}$
                       & $6.4925 \times 10^{-3}$
                             & $3.8323 \times 10^{-3}$
                                           & $2.7439 \times 10^{-4}$ \\
    $\Linf$
                  & $1.4279 \times 10^{-2}$
                       & $1.0146 \times 10^{-2}$
                             & $1.3707 \times 10^{-2}$
                                           & $4.4095 \times 10^{-4}$
  \end{tabular}
\end{table}

\subsection{Convection-diffusion Problem}
Consider the convection-diffusion equation on the unit square
$\Omega = {(0, 1)}^2$ with homogeneous Dirichlet boundary conditions:
\begin{align*}
  \nabla \cdot ((\cos{\pp}, \sin{\pp}) \xf(\xi, \pp))
  -\nabla \cdot (d \nabla \xf(\xi, \pp))
  & = 1,
  & \xi \in \Omega, \\*
  \xf(\xi, \pp) & = 0,
  & \xi \in \partial \Omega,
\end{align*}
where $d = 2^{-5}$ and $\pset = [0, 2 \pi]$.
For the outputs, we chose
$\yf_{\ell}(\pp) = \int_{\Omega_{\ell}} \xf(\xi, \pp) \dif{\xi}$,
$\ell = 1, 2, 3, 4$,
where
$\Omega_1 = (0, \frac{1}{2})^2$,
$\Omega_2 = (\frac{1}{2}, 1) \times (0, \frac{1}{2})$,
$\Omega_3 = (\frac{1}{2}, 1)^2$,
$\Omega_4 = (0, \frac{1}{2}) \times (\frac{1}{2}, 1)$.
After a finite element discretization,
we obtain \iac{fom}
\begin{align*}
  (\cAf_1 + \cos(\pp) \cAf_2 + \sin(\pp) \cAf_3) \xf(\pp) & = \cBf, \\*
  \yf(\pp) & = \cCf \xf(\pp),
\end{align*}
with $\nfom = 1089$, $\nin = 1$, and $\nout = 4$.
Solutions for a few parameter values are given in
\Cref{fig:convection-solutions}.
\begin{figure}[tb]
  \centering
  \begin{minipage}{0.3\linewidth}
    \centering
    \includegraphics[width=\linewidth]{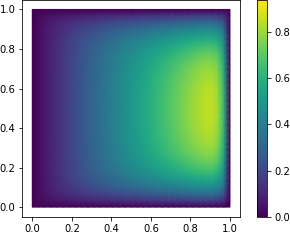} \\
    {\footnotesize $\pp = 0$}
  \end{minipage}
  \hfill
  \begin{minipage}{0.3\linewidth}
    \centering
    \includegraphics[width=\linewidth]{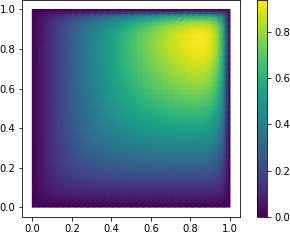} \\
    {\footnotesize $\pp = \frac{\pi}{4}$}
  \end{minipage}
  \hfill
  \begin{minipage}{0.3\linewidth}
    \centering
    \includegraphics[width=\linewidth]{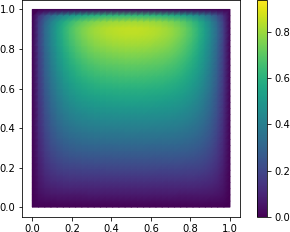} \\
    {\footnotesize $\pp = \frac{\pi}{2}$}
  \end{minipage}
  \caption{Convection-diffusion example \ac{fom} solutions for different
    parameter values}%
  \label{fig:convection-solutions}
\end{figure}
The left plot in \Cref{fig:convection-outputs} shows the output $\yf$.
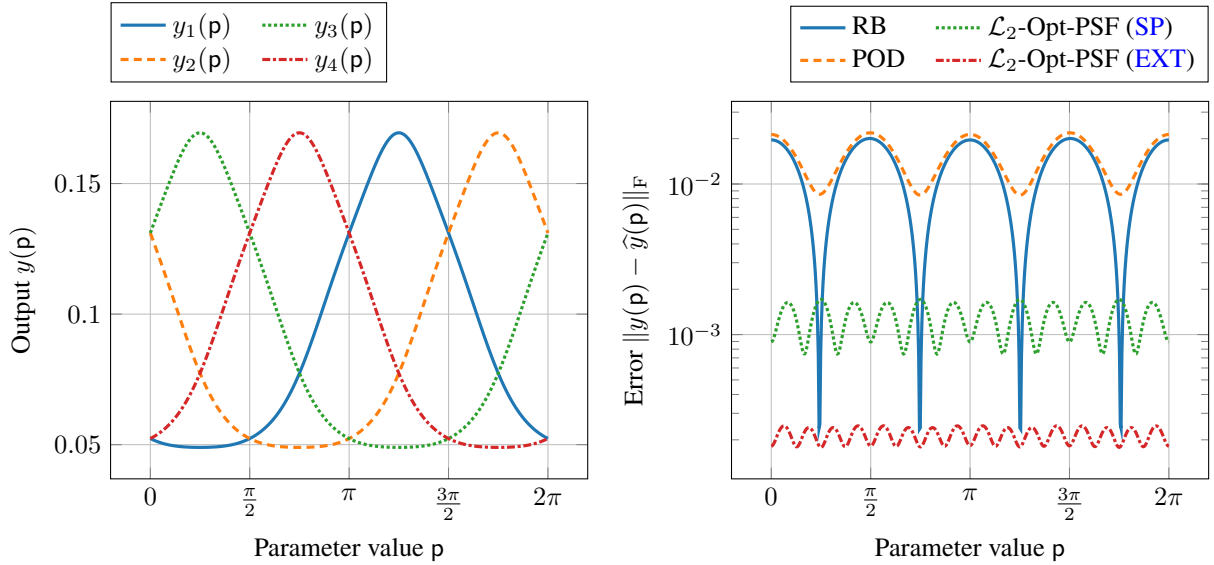
\begin{figure}[tb]
  \centering
  \tikzsetnextfilename{convection-output}
  \begin{tikzpicture}
    \begin{axis}[
        width=0.48\linewidth,
        height=0.4\linewidth,
        xlabel={Parameter value $\pp$},
        ylabel={Output $\yf(\pp)$},
        grid=major,
        legend entries={$\yf_1(\pp)$, $\yf_2(\pp)$, $\yf_3(\pp)$, $\yf_4(\pp)$},
        legend columns=2,
        legend style={
          at={(0, 1.05)},
          anchor=south west,
          /tikz/every even column/.append style={column sep=2ex},
        },
        legend cell align={left},
        transpose legend,
        y tick label style={
          /pgf/number format/.cd,
          fixed relative,
        },
        cycle list name=mpl,
        xtick={0, 1.5708, 3.14159, 4.7123889, 6.28318},
        xticklabels={$0$, $\frac{\pi}{2}$, $\pi$, $\frac{3 \pi}{2}$, $2 \pi$},
      ]
      \addplot table [x=p, y=y1] {convection_output.txt};
      \addplot table [x=p, y=y2] {convection_output.txt};
      \addplot table [x=p, y=y3] {convection_output.txt};
      \addplot table [x=p, y=y4] {convection_output.txt};
    \end{axis}
    \begin{axis}[
        at={(0.5\linewidth, 0)},
        width=0.48\linewidth,
        height=0.4\linewidth,
        xlabel={Parameter value $\pp$},
        ylabel={Error $\normF{\yf(\pp) - \yr(\pp)}$},
        grid=major,
        legend entries={RB, POD, $\Ltwo$-Opt-PSF~\eqref{eq:conv-rom-sp},
          $\Ltwo$-Opt-PSF~\eqref{eq:conv-rom-ext}},
        legend columns=2,
        legend style={
          at={(1, 1.05)},
          anchor=south east,
          /tikz/every even column/.append style={column sep=2ex},
        },
        legend cell align={left},
        transpose legend,
        cycle list name=mpl,
        ymode=log,
        xtick={0, 1.5708, 3.14159, 4.7123889, 6.28318},
        xticklabels={$0$, $\frac{\pi}{2}$, $\pi$, $\frac{3 \pi}{2}$, $2 \pi$},
      ]
      \addplot table [x=p, y=rb] {convection_output_errors.txt};
      \addplot table [x=p, y=pod] {convection_output_errors.txt};
      \addplot table [x=p, y=l2] {convection_output_errors.txt};
      \addplot table [x=p, y=l2ext] {convection_output_errors.txt};
    \end{axis}
  \end{tikzpicture}
  \caption{Convection-diffusion example \ac{fom} output and
    pointwise output error norms for \acp{rom} of order $4$}%
  \label{fig:convection-outputs}
\end{figure}

As for the first example, for the \acp{ddrom},
we consider a structure-preserving model
\begin{equation}\tag{SP}\label{eq:conv-rom-sp}
  \begin{aligned}
    \myparen*{\cAr_1 + \cos(\pp) \cAr_2 + \sin(\pp) \cAr_3} \xr(\pp)
    & =
      \cBr, \\*
    \yr(\pp)
    & =
      \cCr \xr(\pp),
  \end{aligned}
\end{equation}
and an extended version
\begin{equation}\tag{EXT}\label{eq:conv-rom-ext}
  \begin{aligned}
    \myparen*{\cAr_1 + \cos(\pp) \cAr_2 + \sin(\pp) \cAr_3} \xr(\pp)
    & =
      \cBr_1 + \cos(\pp) \cBr_2 + \sin(\pp) \cBr_3, \\*
    \yr(\pp)
    & =
      \myparen*{\cCr_1 + \cos(\pp) \cCr_2 + \sin(\pp) \cCr_3} \xr(\pp).
  \end{aligned}
\end{equation}
The $\cos(\pp)$ and $\sin(\pp)$ terms are inspired by the periodicity of
$y(\pp)$.
The right plot in \Cref{fig:convection-outputs} shows the output errors of
\acp{rom} of order $4$,
due to
\ac{rb},
\ac{pod},
$\Ltwo$-Opt-PSF with structure preservation
(initialized using \ac{rb}), and
$\Ltwo$-Opt-PSF with extended form
(initialized using the structure-preserving $\Ltwo$-Opt-PSF),
illustrating that both $\Ltwo$-Opt-PSF models significantly outperform \ac{rb}
and \ac{pod}.
The relative $\Ltwo$ and $\Linf$ errors for a range of reduced orders are shown
in \Cref{fig:convection-output-rom-errors}.
We observe significant improvements in both the $\Ltwo$ and $\Linf$ errors in
many cases.
\begin{figure}[tb]
  \centering
  \tikzsetnextfilename{convection-rom-errors}
  \begin{tikzpicture}
    \begin{semilogyaxis}[
        width=0.48\linewidth,
        height=0.4\linewidth,
        xlabel={Reduced order},
        ylabel={Relative $\Ltwo$ error},
        grid=major,
        legend entries={RB, POD, $\Ltwo$-Opt-PSF~\eqref{eq:conv-rom-sp},
          $\Ltwo$-Opt-PSF~\eqref{eq:conv-rom-ext}},
        legend columns=-1,
        legend style={
          at={(1.22, 1.05)},
          anchor=south,
          /tikz/every even column/.append style={column sep=2ex},
        },
        legend cell align={left},
        cycle list name=mplmark,
      ]
      \addplot table [x=r, y=rb] {convection_output_rom_l2_errors.txt};
      \addplot table [x=r, y=pod] {convection_output_rom_l2_errors.txt};
      \addplot table [x=r, y=l2] {convection_output_rom_l2_errors.txt};
      \addplot table [x=r, y=l2ext] {convection_output_rom_l2_errors.txt};
    \end{semilogyaxis}
    \begin{semilogyaxis}[
        at={(0.5\linewidth, 0)},
        width=0.48\linewidth,
        height=0.4\linewidth,
        xlabel={Reduced order},
        ylabel={Relative $\Linf$ error},
        grid=major,
        cycle list name=mplmark,
      ]
      \addplot table [x=r, y=rb] {convection_output_rom_linf_errors.txt};
      \addplot table [x=r, y=pod] {convection_output_rom_linf_errors.txt};
      \addplot table [x=r, y=l2] {convection_output_rom_linf_errors.txt};
      \addplot table [x=r, y=l2ext] {convection_output_rom_linf_errors.txt};
    \end{semilogyaxis}
  \end{tikzpicture}
  \caption{Convection-diffusion example errors for \acp{rom} of different
    orders}%
  \label{fig:convection-output-rom-errors}
\end{figure}
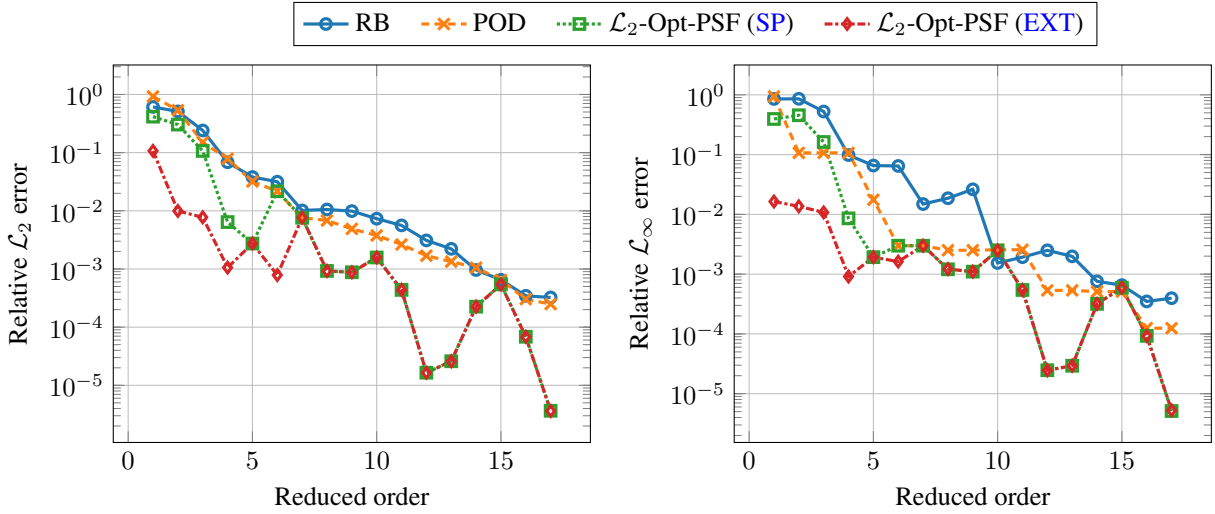

\section{Discrete Least~Squares Norm}%
\label{sec:numerics-disc}
So far, we have considered continuous least-squares problems
where we could evaluate the parameter-to-output mapping $\yf$ for any parameter
value $\pp \in \pset$.
However, in some cases,
we are only given a finite set of points without a chance to re-evaluate $\yf$.
Furthermore, adaptive quadrature used in the previous section can be expensive
for multidimensional parameter spaces.
Thus, it is important to consider the \emph{discrete} setting.

In the discrete setting where we only have samples $(\pp_{\ell}, \yf_{\ell})$,
$\ell = 1, 2, \ldots, N$,
we consider minimizing the \ac{mse}
\begin{equation}\label{eq:mse}
  \objmse
  = \frac{1}{N}
  \sum_{\ell = 1}^N \normF*{\yf_{\ell} - \yr\myparen*{\pp_{\ell}}}^2.
\end{equation}
Our $\Ltwo$-optimal modeling framework recovers the discrete
\ac{mse}~\eqref{eq:mse} by choosing the parameter space as
$\pset = \{\pp_1, \pp_2, \ldots, \pp_N\}$ and the measure as
$\measure = \frac{1}{N} \sum_{\ell = 1}^N \delta_{\pp_{\ell}}$.
With these choices,
the $\Ltwo$ error~\eqref{eq:l2error} becomes the \ac{mse}~\eqref{eq:mse}.
Since the assumptions of \Cref{thm:gradients} are still satisfied for the
discrete measure $\measure$,
we can compute gradients,
which become finite sums in this setting as we summarize in the next result.
\begin{corollary}\label{cor:mse-grad}
  Let
  $(\pp_{\ell}, \yf_{\ell})$, $\ell = 1, 2, \ldots, N$,
  be closed under conjugation and
  $(\cAr_i, \cBr_j, \cCr_k)$ be a tuple of \ac{ddrom} matrices from
  $\romset$~\eqref{eq:setR}.
  Then, the gradients of $\objmse$~\eqref{eq:mse} with respect to the \ac{ddrom}
  matrices are
  \begin{align*}
    \nabla_{\cAr_i} \objmse = {}
    &
      \frac{2}{N}
      \sum_{\ell = 1}^N
      \car_i\myparen*{\overline{\pp_{\ell}}}
      \xrd\myparen*{\pp_{\ell}}
      \mybrack*{\yf_{\ell} - \yr\myparen*{\pp_{\ell}}}
      \xr\myparen*{\pp_{\ell}}\herm,
    & i = 1, 2, \ldots, \qAr, \\*
    \nabla_{\cBr_j} \objmse = {}
    &
      \frac{2}{N}
      \sum_{\ell = 1}^N
      \cbr_j\myparen*{\overline{\pp_{\ell}}}
      \xrd\myparen*{\pp_{\ell}}
      \mybrack*{\yr\myparen*{\pp_{\ell}} - \yf_{\ell}},
    & j = 1, 2, \ldots, \qBr, \\*
    \nabla_{\cCr_k} \objmse = {}
    &
      \frac{2}{N}
      \sum_{\ell = 1}^N
      \ccr_k\myparen*{\overline{\pp_{\ell}}}
      \mybrack*{\yr\myparen*{\pp_{\ell}} - \yf_{\ell}}
      \xr\myparen*{\pp_{\ell}}\herm,
    & k = 1, 2, \ldots, \qCr.
  \end{align*}
\end{corollary}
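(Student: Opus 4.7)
The plan is to derive \Cref{cor:mse-grad} as a direct specialization of \Cref{thm:gradients} to the discrete measure
\[
  \measure = \frac{1}{N} \sum_{\ell = 1}^N \delta_{\pp_{\ell}}
\]
on the finite parameter space $\pset = \{\pp_1, \pp_2, \ldots, \pp_N\}$ (equipped with the power-set $\sigma$-algebra). The integration-by-counting identity $\int_{\pset} f \difm{\pp} = \frac{1}{N} \sum_{\ell=1}^N f(\pp_{\ell})$ then converts the integral gradient formulae of \Cref{thm:gradients} into the sum formulae claimed.

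The first step is to check that this discrete $(\pset, \Sigma, \measure)$ together with the data $\yf$ defined by $\yf(\pp_{\ell}) = \yf_{\ell}$ fulfills \Cref{assumption:fom}. Closure of $\pset$, $\Sigma$, and $\measure$ under conjugation follows from the hypothesis that the samples $(\pp_{\ell}, \yf_{\ell})$ are closed under conjugation: for every $\pp_{\ell}$ the conjugate $\overline{\pp_{\ell}}$ appears in the list and carries the conjugated output value, so the Dirac weights match after conjugation. Measurability and square-integrability of $\yf$ on a finite set are automatic. Next, I would verify \Cref{assumption:scalar-funcs}: the integrand in~\eqref{eq:abc-l2-bounded} is evaluated at only finitely many points, so the integral reduces to a finite sum that is trivially finite whenever each $\Ar(\pp_{\ell})$ is invertible, which is exactly what membership of $(\cAr_i, \cBr_j, \cCr_k)$ in $\romset$ guarantees. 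Likewise, the essential supremum in~\eqref{eq:setR} reduces to a finite maximum.

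With all hypotheses of \Cref{thm:gradients} verified for the discrete measure, I would invoke that theorem to obtain
\[
  \nabla_{\cAr_i} \objmse
  = 2 \int_{\pset} \car_i(\overline{\pp}) \,
    \xrd(\pp) \mybrack*{\yf(\pp) - \yr(\pp)} \xr(\pp)\herm \difm{\pp},
\]
and the analogous expressions for $\nabla_{\cBr_j} \objmse$ and $\nabla_{\cCr_k} \objmse$. Applying the definition of $\measure$ and evaluating the Dirac integrals gives the claimed sums after substituting $\yf(\pp_{\ell}) = \yf_{\ell}$.

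There is no real obstacle: this is a specialization argument and the only thing requiring care is the bookkeeping around conjugation closure of the sample set, which guarantees that the discrete measure itself satisfies the conjugation-closure hypothesis built into \Cref{assumption:fom}. Once that is in place, the rest is a direct translation of integrals into finite sums.
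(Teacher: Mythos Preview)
Your proposal is correct and matches the paper's own proof, which likewise obtains the result directly from \Cref{thm:gradients} by setting $\pset = \{\pp_1, \ldots, \pp_N\}$, $\measure = \frac{1}{N} \sum_{\ell = 1}^N \delta_{\pp_{\ell}}$, and $\yf(\pp_{\ell}) = \yf_{\ell}$. Your version is in fact more detailed, as you explicitly verify the conjugation-closure and integrability hypotheses that the paper leaves implicit.
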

\begin{proof}
  The result follows directly from \Cref{thm:gradients} by setting
  $\pset = \{\pp_1, \ldots, \pp_N\}$,
  $\measure = \frac{1}{N} \sum_{\ell = 1}^N \delta_{\pp_{\ell}}$, and
  $\yf(\pp_{\ell}) = \yf_{\ell}$.
\end{proof}
For the discrete setting we still use \Cref{alg:l2opt}.
The only difference from the continuous setting is that we do not use adaptive
quadrature, but directly evaluate the sums.
As before, gradient computations only use the output samples $\yf(\pp_{\ell})$.

In the following we demonstrate the results on two examples;
one related to \ac{lti} systems, and
the other arising from a Poisson equation with multiple parameters.

\subsection{Flexible Aircraft Frequency Response Data}
Here we use the data from~\cite{morwiki_flexible_aircraft,PouQV18},
containing samples of a transfer function $H(s)$ of \iac{lti} dynamical system
(as in~\eqref{eq:lti})
describing the influence of wind gust on a flexible aircraft.
In particular, the underlying dynamical system has
$\nin = 1$ forcing (gust disturbance) and
$\nout = 92$ outputs (accelerations and moments at different coordinates of a
flexible aircraft wings and tail);
thus in this problem $H(s) \in \mathbb{C}^{92 \times 1}$.
The output (transfer function) data consists of $N = 421$ pairs
$(\omega_{\ell}, H_{\ell})$, $\ell = 1, 2, \ldots, N$,
where
$\omega_{\ell} > 0$ are the frequencies and
$H_{\ell} \in \CCoi$ are the frequency domain samples.
The left plot in \Cref{fig:flexible-aircraft} shows the magnitudes (norms) of
the frequency responses.
Our goal in this setting is to learn \iac{ddrom} of the form as
in~\eqref{eq:romH}, i.e., $\hH(s) = \hC \myparen{s \hE - \hA}^{-1} \hB$,
that minimizes the MSE distance from the given samples $\{H_\ell\}$.
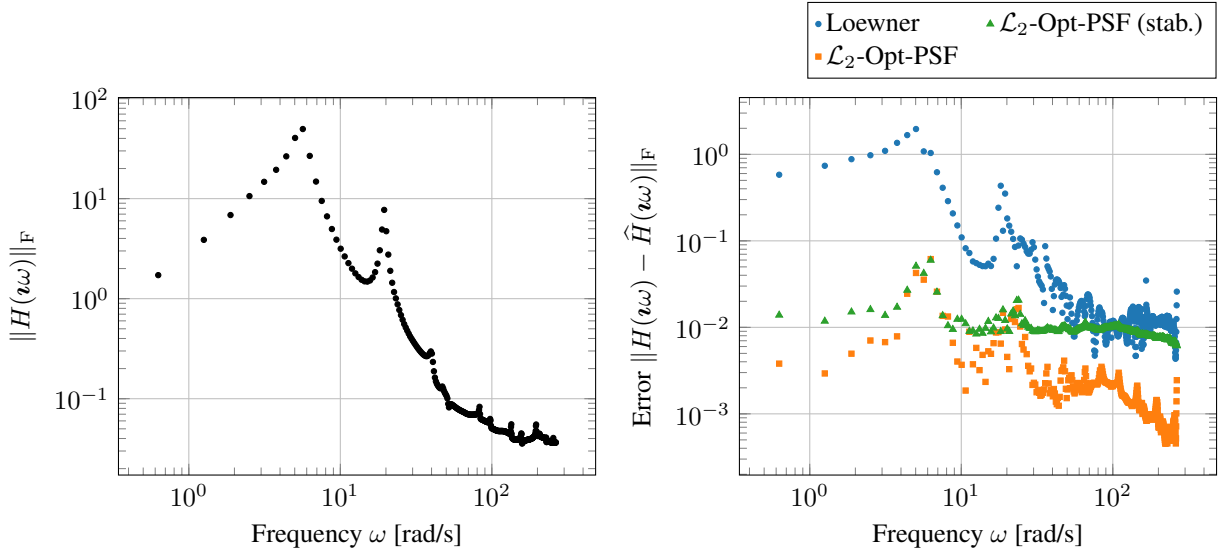
\begin{figure}[tb]
  \centering
  \tikzsetnextfilename{flexible_aircraft-mag}
  \begin{tikzpicture}
    \begin{loglogaxis}[
        width=0.48\linewidth,
        height=0.4\linewidth,
        xlabel={Frequency $\omega$ [rad/s]},
        ylabel={$\normF{H(\imag \omega)}$},
        grid=major,
      ]
      \addplot[black, only marks, mark=*, mark size=1] table [x=w, y=fom]
        {flexible_aircraft_mag.txt};
    \end{loglogaxis}
    \begin{loglogaxis}[
        at={(0.5\linewidth, 0)},
        width=0.48\linewidth,
        height=0.4\linewidth,
        xlabel={Frequency $\omega$ [rad/s]},
        ylabel={Error $\normF{H(\imag \omega) - \hH(\imag \omega)}$},
        grid=major,
        legend entries={Loewner, $\Ltwo$-Opt-PSF, $\Ltwo$-Opt-PSF (stab.)},
        legend columns=2,
        legend style={
          at={(1, 1.05)},
          anchor=south east,
          /tikz/every even column/.append style={column sep=2ex},
        },
        legend cell align={left},
        transpose legend,
        cycle list name=mplmarkonly,
      ]
      \addplot table [x=w, y=rom0]
        {flexible_aircraft_error.txt};
      \addplot table [x=w, y=rom]
        {flexible_aircraft_error.txt};
      \addplot table [x=w, y=romstab] {flexible_aircraft_error.txt};
    \end{loglogaxis}
  \end{tikzpicture}
  \caption{Flexible aircraft example data and
    pointwise output errors for \acp{rom} of order $100$ ($97$)}%
  \label{fig:flexible-aircraft}
\end{figure}

To fulfill the assumptions of \Cref{cor:mse-grad},
we set
\begin{equation*}
  \mybrace*{(\pp_{\ell}, \yf_{\ell})}_{\ell = 1}^{2 N}
  =
  \mybrace*{\myparen*{\imag \omega_{\ell}, H_{\ell}}}_{\ell = 1}^N
  \cup
  \mybrace*{\myparen*{-\imag \omega_{\ell}, \overline{H_{\ell}}}}_{\ell = 1}^N.
\end{equation*}
Based on this setup, we run $\Ltwo$-Opt-PSF with L-BFGS to minimize the
\ac{mse}~\eqref{eq:mse} using the
optimization variables $\hE, \hA \in \CCrr$, $\hB \in \CCri$, and
$\hC \in \CCor$.
We initialize $\Ltwo$-Opt-PSF using the \ac{rom} of order $\nrom = 100$
from~\cite{PouQV18},
which uses {the same data} but employs the interpolatory Loewner
framework~\cite{AntBG20}.
We obtain \iac{ddrom} of order $\nrom = 100$ with $3$ unstable poles.
Then, we project the resulting \ac{ddrom} to its asymptotically stable part of
order $\nrom = 97$.
\Cref{fig:flexible-aircraft} shows the error resulting from
the Loewner-based model from~\cite{PouQV18},
the unstable \ac{ddrom} of order $100$, and
its asymptotically stable part of order $97$.
The respective relative $\Ltwo$ errors are
$4.82 \times 10^{-2}$,
$1.3146 \times 10^{-3}$, and
$2.5255 \times 10^{-3}$.
These results illustrate that $\Ltwo$-Opt-PSF has resulted in more than one
order of magnitude improvement.
Moreover, the unstable part of the $\Ltwo$-Opt-PSF model was small enough that
projecting it onto the asymptotically stable part did not significantly degrade
the accuracy.
Note that we could impose stability constraint directly in the optimization
routine, similar to~\cite{HunMMS21}.
Here we skipped that step to illustrate that $\Ltwo$-Opt-PSF followed by
projection to the asymptotically stable part can still produce \iac{ddrom} with
a small error.
\begin{remark}
  In this example,
  the \ac{ddrom} resulting from $\Ltwo$-Opt-PSF is a rational function;
  thus our $\Ltwo$-optimal modeling framework in this special case has solved
  the rational least-squares problem via a gradient-based descent algorithm.
  Minimizing the \ac{mse} using a rational function
  (rational least-square fitting)
  is an important problem in data-driven modeling of dynamical systems and
  various techniques exist, see,
  e.g.,~\cite{SanK63,GusS99,DrmGB15,DrmGB15a,HokM20,NakST18,BerG17},
  and the references therein.
  In a future work where we specifically focus on dynamical systems and
  approximation of transfer functions from data,
  we will provide more details in this direction.
\end{remark}

\subsection{Thermal Block Example}
We consider a $2 \times 2$ thermal block example over the unit square
$\Omega = {(0, 1)}^2$, i.e.,
we modify the Poisson equation in~\eqref{eq:poisson} by using the diffusion term
\begin{equation*}
  d(\xi, \pp)
  =
  \pp_1 \chi_{\myparen*{0, \frac{1}{2}}^2}(\xi)
  + \pp_2 \chi_{\myparen*{\frac{1}{2}, 1} \times \myparen*{0, \frac{1}{2}}}(\xi)
  + \pp_3 \chi_{\myparen*{\frac{1}{2}, 1}^2}(\xi)
  + \pp_4 \chi_{\myparen*{0, \frac{1}{2}} \times \myparen*{\frac{1}{2}, 1}}(\xi)
\end{equation*}
and
$\pset = {[0.1, 10]}^4$.
Here, $\chi_S$ denotes the characteristic function of the set $S$, i.e.,
$\chi_S(\xi) = 1$ if $\xi \in S$, otherwise $\chi_S(\xi) = 0$.
Therefore, the parameters $\pp_1, \ldots, \pp_4$ represent the diffusivity over
each of the four blocks.
After a finite element discretization,
we obtain the \ac{fom} is of the form
\begin{align*}
  \myparen*{
    \cAf_0
    + \pp_1 \cAf_1
    + \pp_2 \cAf_2
    + \pp_3 \cAf_3
    + \pp_4 \cAf_4
  }
  \xf(\pp)
  & = \cBf, \\*
  \yf(\pp)
  & = \cCf \xf(\pp),
\end{align*}
with $\nfom = 1089$ and $\nin = 1$.
For the output,
we kept $\cCf = \cBf\tran$.
The left plot in \Cref{fig:thermalblock} shows the output of the \ac{fom} over
the equidistant grid
$\pset_{\textnormal{test}} = {\mathtt{linspace}(0.1, 10, 5)}^4$.

\begin{figure}[tb]
  \centering
  \tikzsetnextfilename{thermalblock-output}
  \begin{tikzpicture}
    \begin{axis}[
        width=0.48\linewidth,
        height=0.4\linewidth,
        xlabel={Parameter Index $\ell$},
        ylabel={Output $\yf(\pp_{\ell})$},
        grid=major,
        cycle list name=mplmarkonly,
      ]
      \addplot table [x=p, y=y] {thermalblock_output.txt};
    \end{axis}
    \begin{axis}[
        at={(0.5\linewidth, 0)},
        width=0.48\linewidth,
        height=0.4\linewidth,
        xlabel={Parameter Index $\ell$},
        ylabel={Error $\abs{\yf(\pp_{\ell}) - \yr(\pp_{\ell})}$},
        grid=major,
        legend entries={RB, POD, $\Ltwo$-Opt-PSF},
        legend columns=2,
        legend style={
          at={(0.5, 1.05)},
          anchor=south,
          /tikz/every even column/.append style={column sep=2ex},
        },
        legend cell align={left},
        transpose legend,
        cycle list name=mplmarkonly,
        ymode=log,
        ymin=1e-6,
      ]
      \addplot table [x=p, y=rb] {thermalblock_output_errors.txt};
      \addplot table [x=p, y=pod] {thermalblock_output_errors.txt};
      \addplot table [x=p, y=dd] {thermalblock_output_errors.txt};
    \end{axis}
  \end{tikzpicture}
  \caption{Thermal block example \ac{fom} output and
    pointwise output errors for \acp{rom} of order $4$
    on the equidistant grid ${\mathtt{linspace}(0.1, 10, 5)}^4$ with
    $5^4 = 625$ points}%
  \label{fig:thermalblock}
\end{figure}
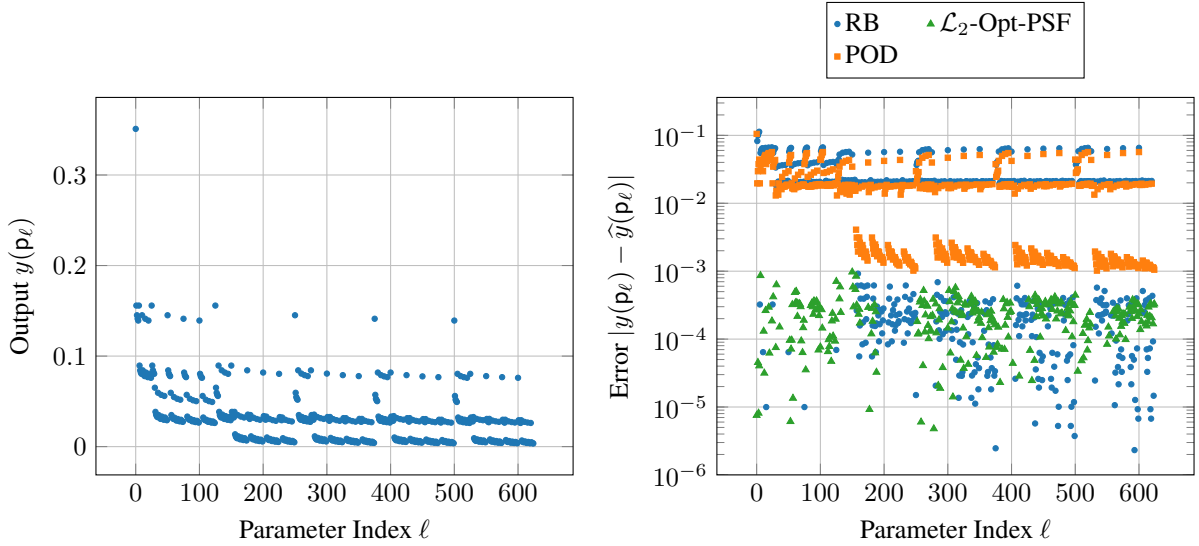

We look for structure-preserving \acp{ddrom} of the same form:
\begin{align*}
  \myparen*{
    \cAr_0
    + \pp_1 \cAr_1
    + \pp_2 \cAr_2
    + \pp_3 \cAr_3
    + \pp_4 \cAr_4
  }
  \xr(\pp)
  & = \cBr, \\*
  \yr(\pp)
  & = \cCr \xr(\pp).
\end{align*}
We choose $\nrom = 4$ as the reduced order.
We construct two projection-based \acp{rom} using \ac{rb} and \ac{pod}
trained on the grid
$\pset_{\textnormal{train}} = {\mathtt{linspace}(0.1, 10, 4)}^4$.
Then, using only the discrete output samples $y(\pp_\ell)$ from the same
training grid, i.e.,
a total of $N = 4^4 = 256$ output samples $y(\pp_\ell)$,
we run the discrete $\Ltwo$-Opt-PSF
(initialized with the \ac{pod} \ac{rom})
to minimize the \ac{mse}~\eqref{eq:mse}.
\Cref{fig:thermalblock} shows the outputs and the resulting errors over
$\pset_{\textnormal{test}}$.
The figure shows a significant reduction of the errors for the $\Ltwo$
\ac{ddrom}.
We note that $\pset_{\textnormal{test}}$ and $\pset_{\textnormal{train}}$
overlap only on $2^4 = 16$ points (corners) out of $5^4 = 625$ test parameter
values.
The relative $\Ltwo$ errors over $\pset_{\textnormal{test}}$ for \ac{rb},
\ac{pod}, and $\Ltwo$-Opt-PSF are, respectively,
$6.037 \times 10^{-1}$,
$4.8002 \times 10^{-1}$, and
$1.0266 \times 10^{-2}$.

\section{\texorpdfstring{$\Ltwo$}{L2}-optimal DDROMs and
  Projection-based ROMs}%
\label{sec:mor-by-projection}
\Ac{rb} and \ac{pod}, as implemented in~\Cref{sec:numerics-cont},
use Galerkin projection.
In \Cref{sec:numerics-cont},
we found that $\Ltwo$-optimal \acp{ddrom} obtained via \Cref{alg:l2opt} are not
based on Galerkin projection for the compliant Poisson examples.
The question remains whether the $\Ltwo$-optimal \acp{ddrom} can be constructed
by a (state-independent) Petrov-Galerkin projection.
The answer is clearly ``no'' when the \ac{fom} and \ac{ddrom} are of different
structure, in particular, when the \ac{ddrom} has an additional nonzero term as
we have in our extended $\Ltwo$-optimal \acp{ddrom}.
Therefore, we consider the case when the two models have the same
parameter-separable forms as in~\eqref{eq:rom-param-sep-form}.
Let $\Ar(\pp)$,  $\Br(\pp)$, and $\Cr(\pp)$ be the \ac{ddrom} quantities
obtained via \Cref{alg:l2opt}.
We want to see if there exist $V, W \in \RRfr$ of full-rank such that
\begin{equation*}
  \Ar(\pp) = W\tran \Af(\pp) V, \quad
  \Br(\pp) = W\tran \Bf(\pp), \quad
  \Cr(\pp) = \Cf(\pp) V.
\end{equation*}
Assuming that the sets of scalar functions
${\{\car_i\}}_{i = 1}^{\qAr}$,
${\{\cbr_j\}}_{j = 1}^{\qBr}$, and
${\{\ccr_k\}}_{k = 1}^{\qCr}$
are linearly independent,
this is equivalent to
\begin{subequations}\label{eq:projection}
  \begin{align}
    \label{eq:projection-A}
    \cAr_i & = W\tran \cAf_i V, & i = 1, 2, \ldots, \qAr, \\
    \label{eq:projection-B}
    \cBr_j & = W\tran \cBf_j, & j = 1, 2, \ldots, \qBr, \\
    \label{eq:projection-C}
    \cCr_k & = \cCf_k V, & k = 1, 2, \ldots, \qCr.
  \end{align}
\end{subequations}
We observe that~\eqref{eq:projection} is a system of nonlinear equations,
generally difficult to analyze.
However, the equations containing $\cBf_j$ and $\cCf_k$ are linear,
which we exploit next.

By vertically stacking~\eqref{eq:projection-C},
we obtain the linear system $\cCfblock V = \cCrblock$, where
\begin{equation}\label{eq:cC}
  \cCfblock =
  \begin{bmatrix}
    \cCf_1\tran &
    \cdots &
    \cCf_{\qCr}\tran
  \end{bmatrix}\tran
  \quad \textnormal{ and } \quad
  \cCrblock =
  \begin{bmatrix}
    \cCr_1\tran &
    \cdots &
    \cCr_{\qCr}\tran
  \end{bmatrix}\tran.
\end{equation}
If $\cCfblock$ is of full row rank (in particular, $\qCr \nout \le \nfom$),
using its singular value decomposition
\begin{equation}
  \label{eq:cC-svd}
  \cCfblock =
  U_{\cCfblock}
  \begin{bmatrix}
    \Sigma_{\cCfblock} & 0
  \end{bmatrix}
  \begin{bmatrix}
    V_{\cCfblock, 1} & V_{\cCfblock, 2}
  \end{bmatrix}\tran,
\end{equation}
we can write the solution to $\cCfblock V = \cCrblock$ as
\begin{equation}\label{eq:V-lstsq}
  V =
  {\underbrace{V_{\cCfblock, 1} \Sigma_{\cCfblock}^{-1} U_{\cCfblock}\tran
      \cCrblock}_{\eqqcolon V_1}}
  + V_{\cCfblock, 2} X,
\end{equation}
for some $X \in \RR^{(\nfom - \qCr \nout) \times \nrom}$.
Similarly, we find that
\begin{equation}\label{eq:W-lstsq}
  W =
  {\underbrace{U_{\cBfblock, 1} \Sigma_{\cBfblock}^{-1} V_{\cBfblock}\tran
      \cBrblock\tran}_{\eqqcolon W_1}}
  + U_{\cBfblock, 2} Y,
\end{equation}
for some $Y \in \RR^{(\nfom - \qBr \nin) \times \nrom}$,
where
\begin{gather}
  \label{eq:cB}
  \cBfblock =
  \begin{bmatrix}
    \cBf_1 & \cdots & \cBf_{\qBr}
  \end{bmatrix}\!, \quad
  \cBrblock =
  \begin{bmatrix}
    \cBr_1 & \cdots & \cBr_{\qBr}
  \end{bmatrix}\!, \\*
  \label{eq:cB-svd}
  \cBfblock =
  \begin{bmatrix}
    U_{\cBfblock, 1} \\
    U_{\cBfblock, 2}
  \end{bmatrix}\tran
  \begin{bmatrix}
    \Sigma_{\cBfblock} \\
    0
  \end{bmatrix}
  V_{\cBfblock}\tran,
\end{gather}
assuming that $\cBfblock$ is of full column rank.
Then, from~\eqref{eq:projection-A}, we obtain
\begin{equation}\label{eq:projection-A-X}
  (W_1 + U_{\cBfblock, 2} Y)\tran \cAf_i V_{\cCfblock, 2} X
  = \cAr_i - (W_1 + U_{\cBfblock, 2} Y)\tran \cAf_i V_1, \quad
  i = 1, 2, \ldots, \qAr.
\end{equation}
Stacking all the quantities, we obtain
\begin{align*}
  \begin{bmatrix}
    (W_1 + U_{\cBfblock, 2} Y)\tran \cAf_1 V_{\cCfblock, 2} \\
    \vdots \\
    (W_1 + U_{\cBfblock, 2} Y)\tran \cAf_{\qAr} V_{\cCfblock, 2}
  \end{bmatrix}
  X =
  \begin{bmatrix}
    \cAr_1 - (W_1 + U_{\cBfblock, 2} Y)\tran \cAf_1 V_1 \\
    \vdots \\
    \cAr_{\qAr} - (W_1 + U_{\cBfblock, 2} Y)\tran \cAf_{\qAr} V_1
  \end{bmatrix}\!,
\end{align*}
which is a system of $\nrom^2 \qAr$ equations and $\nrom (\nfom - \qCr \nout)$
unknowns.
Assuming that the system matrix is of full row rank,
there is at least one $X$ that solves the system.
Thus we have just proved the following result.
\begin{theorem}\label{thm:projection-rom}
  Let $\cBfblock$ in~\eqref{eq:cB} be of full column rank and
  $\cCfblock$ in~\eqref{eq:cC} of full row rank.
  Let their singular value decompositions be given by~\eqref{eq:cB-svd}
  and~\eqref{eq:cC-svd}.
  Also, let $V_1$ and $W_2$ be as in~\eqref{eq:V-lstsq} and~\eqref{eq:W-lstsq}.
  If there exists $Y \in \RR^{(\nfom - \qBr \nin) \times \nrom}$ such that
  \begin{align*}
    \begin{bmatrix}
      (W_1 + U_{\cBfblock, 2} Y)\tran \cAf_1 V_{\cCfblock, 2} \\
      \vdots \\
      (W_1 + U_{\cBfblock, 2} Y)\tran \cAf_{\qAr} V_{\cCfblock, 2}
    \end{bmatrix}
    \in \RR^{\qAr \nrom \times (\nfom - \qCr \nout)}
  \end{align*}
  is of full row rank,
  then there exist $V, W \in \RRfr$ satisfying~\eqref{eq:projection}.
\end{theorem}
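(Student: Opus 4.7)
The plan is to reduce the nonlinear system \eqref{eq:projection} to a sequence of linear systems by exploiting the full-rank hypotheses on $\cBfblock$ and $\cCfblock$. First I would handle the $B$- and $C$-blocks \eqref{eq:projection-B} and \eqref{eq:projection-C} in isolation: vertically stacking \eqref{eq:projection-C} gives the matrix equation $\cCfblock V = \cCrblock$, and analogously stacking \eqref{eq:projection-B} yields $\cBfblock\tran W = \cBrblock\tran$. Both systems are consistent since $\cCfblock$ is of full row rank and $\cBfblock$ is of full column rank, so their right-hand sides lie in the relevant ranges. Using the SVDs \eqref{eq:cC-svd} and \eqref{eq:cB-svd}, I would then write the complete solution sets in closed form as \eqref{eq:V-lstsq} and \eqref{eq:W-lstsq}, parameterized by free matrices $X \in \RR^{(\nfom - \qCr \nout) \times \nrom}$ and $Y \in \RR^{(\nfom - \qBr \nin) \times \nrom}$ whose images lie in $\ker \cCfblock$ and $\ker \cBfblock\tran$, respectively.

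Second, I would substitute these parameterized expressions into the remaining block \eqref{eq:projection-A}. Because $\cAf_i$ enters bilinearly in $W$ and $V$, the substitution yields, for each $i$ and any fixed $Y$, the affine matrix equation \eqref{eq:projection-A-X}, which is \emph{linear} in $X$. Vertically stacking these equations over $i = 1, 2, \ldots, \qAr$ produces a single matrix equation whose left-hand-side operator sends $X$ to the stacked block matrix
\begin{equation*}
  M(Y) X, \quad
  M(Y) =
  \begin{bmatrix}
    (W_1 + U_{\cBfblock, 2} Y)\tran \cAf_1 V_{\cCfblock, 2} \\
    \vdots \\
    (W_1 + U_{\cBfblock, 2} Y)\tran \cAf_{\qAr} V_{\cCfblock, 2}
  \end{bmatrix}.
\end{equation*}
Third, I would invoke the hypothesis that $M(Y)$ has full row rank for some choice of $Y$. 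Full row rank is exactly the condition that left-multiplication by $M(Y)$ is surjective, so the stacked linear equation for $X$ admits a solution for every right-hand side, in particular for the one produced by the \ac{ddrom} matrices $\cAr_i$. Plugging this $(X, Y)$ back into \eqref{eq:V-lstsq} and \eqref{eq:W-lstsq} yields $V, W \in \RRfr$ satisfying \eqref{eq:projection-A} by construction, while \eqref{eq:projection-B} and \eqref{eq:projection-C} hold by the choice of parameterization.

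The main obstacle I anticipate is the careful bookkeeping of dimensions and of exactly where each hypothesis enters. In particular, one must verify that (i) the full-rank assumptions implicitly require $\qCr \nout \le \nfom$ and $\qBr \nin \le \nfom$, so that the free parameters $X$ and $Y$ have nontrivial domains and $V_{\cCfblock,2}$, $U_{\cBfblock,2}$ are nonempty; (ii) vectorizing the stacked equation for $X$ via $\vecop$ produces a coefficient matrix that is, up to transposition, a Kronecker product of $M(Y)$ with $I_{\nrom}$, so that full row rank of $M(Y)$ translates to surjectivity column-by-column on the matrix $X$; and (iii) the assumption on the scalar functions $\car_i, \cbr_j, \ccr_k$ being linearly independent is needed to pass from equality of the parameter-separable forms to equality \eqref{eq:projection} of individual coefficient matrices.
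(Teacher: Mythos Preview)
Your proposal is correct and follows essentially the same route as the paper: the paper's proof is precisely the discussion preceding the theorem, which stacks \eqref{eq:projection-B}--\eqref{eq:projection-C}, parameterizes their solution sets via the SVDs as in \eqref{eq:V-lstsq}--\eqref{eq:W-lstsq}, substitutes into \eqref{eq:projection-A} to obtain the stacked linear system in $X$, and then invokes the full-row-rank hypothesis on $M(Y)$ to guarantee a solution. Your bookkeeping remarks (i)--(iii) are accurate and match what the paper either states explicitly or tacitly assumes.
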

This results says that, in the generic case,
all \acp{ddrom} with the same parameter-separable form as the \ac{fom} can be
formed using Petrov-Galerkin projection,
including $\Ltwo$-optimal ones.
Note that a similar, dual result to \Cref{thm:projection-rom} can be obtained by
fixing $X$ and solving for $Y$ in~\eqref{eq:projection-A-X}.

\section{Conclusions}%
\label{sec:conclusion}
We presented a gradient-based descent algorithm to construct data-driven
$\Ltwo$-optimal reduced-order models
that only requires access to output samples.
By appropriately defining the measure and parameter space, the framework we
developed covers both continuous (Lebesgue) and discrete cost functions, and
stationary and dynamical systems.
The various numerical examples illustrated the efficiency of the proposed
$\Ltwo$-optimal modeling approach.
Moreover, we have developed the generic conditions for \iac{ddrom} to be
projection-based.
The gradients derived in this paper have direct implications for and connections
to interpolatory model reduction methods and these issues will be revisited in a
separate work.

\appendix
\section{Proof of
  \texorpdfstring{\Cref{lem:rom-set-open-yr-bounded}}{Lemma}}%
\label{sec:proofs}
  First, we show that $\romset$ is an open subset of $R$.
  Let $(\cAr_i, \cBr_j, \cCr_k) \in \romset$ be arbitrary.
  The definition of $\romset$~\eqref{eq:setR} yields
  \begin{equation}\label{eq:Ar-M-bound}
    \abs*{\car_i(\pp)} \normF*{\Ar(\pp)^{-1}} \le M,
    \textnormal{ for all } i = 1, 2, \ldots, \qAr
    \textnormal{ and for } \measure\textnormal{-almost all } \pp \in \pset,
  \end{equation}
  for some $M > 0$.
  Then let $\Delta\cAr_i \in \RRrr$,
  for $i = 1, 2, \ldots, \qAr$,
  with
  $\sum_{\bar{\imath} = 1}^{\qAr} \normF{\Delta\cAr_{\bar{\imath}}}
  \le \frac{1}{2 M}$,
  be arbitrary.
  For $\measure$-almost all $\pp \in \pset$,
  using~\eqref{eq:Ar-M-bound} in the second inequality, we obtain
  \begin{equation}\label{eq:part-bound}
    \normF*{\sum_{\bar{\imath} = 1}^{\qAr}
      \car_{\bar{\imath}}(\pp) \Delta\cAr_{\bar{\imath}} \Ar(\pp)^{-1}}
    \le
    \sum_{\bar{\imath} = 1}^{\qAr}
    \abs{\car_{\bar{\imath}}(\pp)}
    \normF*{\Ar(\pp)^{-1}}
    \normF*{\Delta\cAr_{\bar{\imath}}}
    \le
    M
    \sum_{\bar{\imath} = 1}^{\qAr}
    \normF*{\Delta\cAr_{\bar{\imath}}}
    \le
    \frac{1}{2}.
  \end{equation}
  In the following,
  our goal is to show that
  $(\cAr_i + \Delta \cAr_i, \cBr_j, \cCr_k) \in \romset$.
  To start, we have
  \begin{align*}
    \abs*{\car_i(\pp)}
    &
      \normF*{\myparen*{\sum_{\bar{\imath} = 1}^{\qAr} \car_{\bar{\imath}}(\pp)
        \myparen*{\cAr_{\bar{\imath}} + \Delta\cAr_{\bar{\imath}}}}^{-1}}
      =
      \abs*{\car_i(\pp)}
      \normF*{\myparen*{\Ar(\pp)
        + \sum_{\bar{\imath} = 1}^{\qAr}
          \car_{\bar{\imath}}(\pp)
          \Delta\cAr_{\bar{\imath}}}^{-1}} \\
    & =
      \abs*{\car_i(\pp)}
      \normF*{
        \Ar(\pp)^{-1}
        \myparen*{I
          + \sum_{\bar{\imath} = 1}^{\qAr}
            \car_{\bar{\imath}}(\pp)
            \Delta\cAr_{\bar{\imath}}
            \Ar(\pp)^{-1}
        }^{-1}
      } \\
    & \le
      \abs*{\car_i(\pp)}
      \normF*{\Ar(\pp)^{-1}}
      \normF*{
        \myparen*{I
          + \sum_{\bar{\imath} = 1}^{\qAr}
            \car_{\bar{\imath}}(\pp)
            \Delta\cAr_{\bar{\imath}}
            \Ar(\pp)^{-1}
        }^{-1}
      }.
  \end{align*}
  Using that $\normF{(I - X)^{-1}} \le \frac{1}{1 - \normF{X}}$
  for all $X \in \CCrr$ such that $\normF{X} < 1$,
  from~\eqref{eq:part-bound} and~\eqref{eq:Ar-M-bound} we get
  \begin{equation*}
    \abs*{\car_i(\pp)}
    \normF*{\myparen*{\sum_{\bar{\imath} = 1}^{\qAr} \car_{\bar{\imath}}(\pp)
        \myparen*{\cAr_{\bar{\imath}} + \Delta\cAr_{\bar{\imath}}}}^{-1}}
    \le
    \abs*{\car_i(\pp)}
    \normF*{\Ar(\pp)^{-1}}
    \frac{1}{1 - \frac{1}{2}}
    \le 2 M
    < \infty,
  \end{equation*}
  for $\measure$-almost every $\pp \in \pset$.
  Therefore, we have
  $(\cAr_i + \Delta \cAr_i, \cBr_j, \cCr_k) \in \romset$.
  Since $\cBr_j$ and $\cCr_k$ are arbitrary, it follows that there is an open
  neighborhood of $(\cAr_i, \cBr_j, \Cr_k)$ in $\romset$.

  Next we prove that $\normLtwomu{\yr} < \infty$.
  Note that from~\eqref{eq:Ar-M-bound},
  if $\car_i(\pp) \neq 0$,
  then $\normF{\Ar(\pp)^{-1}} \le \frac{M}{\abs{\car_i(\pp)}}$.
  Furthermore,
  note that the set of parameter values $\pp \in \pset$ such that
  $\car_i(\pp) = 0$ for all $i = 1, 2, \ldots, \qAr$
  forms a set of $\measure$-measure zero
  (otherwise, this would contradict~\eqref{eq:abc-l2-bounded}).
  Therefore,
  \begin{equation}\label{eq:Ar-bound}
    \normF*{\Ar(\pp)^{-1}}
    \le \min_{i} \frac{M}{\abs{\car_i(\pp)}}
    \le \frac{M}{\max_{i} \abs{\car_i(\pp)}},
    \textnormal{ for } \measure\textnormal{-almost all } \pp \in \pset.
  \end{equation}
  Using submultiplicativity and the triangle inequality, we obtain
  \begin{align}
    \nonumber
    \normLtwo{\yr}^2
    & =
      \int_{\pset} \normF*{\Cr(\pp) \Ar(\pp)^{-1} \Br(\pp)}^2 \difm{\pp} \\
    \label{eq:l2-bound-norm-prod}
    & \le
      \int_{\pset}
      \normF*{\Cr(\pp)}^2 \normF*{\Ar(\pp)^{-1}}^2 \normF*{\Br(\pp)}^2
      \difm{\pp} \\
    \nonumber
    & \le
      \int_{\pset}
      \myparen*{\sum_{k = 1}^{\qCr} \abs*{\ccr_k(\pp)} \normF*{\cCr_k}}^2
      \normF*{\Ar(\pp)^{-1}}^2
      \myparen*{\sum_{j = 1}^{\qBr} \abs*{\cbr_j(\pp)} \normF*{\cBr_j}}^2
      \difm{\pp}.
  \end{align}
  Using $\normF{\cBr_j} \le \max_{\bar{\jmath}} \normF{\cBr_{\bar{\jmath}}}$ and
  $\normF{\cCr_k} \le \max_{\bar{k}} \normF{\cCr_{\bar{k}}}$,
  we find
  \begin{align*}
    \normLtwo{\yr}^2
    & \le
      \max_{\bar{\jmath}} \normF*{\cBr_{\bar{\jmath}}}^2
      \max_{\bar{k}} \normF*{\cCr_{\bar{k}}}^2
      \int_{\pset}
      \myparen*{\sum_{j = 1}^{\qBr} \abs*{\cbr_j(\pp)}}^2
      \myparen*{\sum_{k = 1}^{\qCr} \abs*{\ccr_k(\pp)}}^2
      \normF*{\Ar(\pp)^{-1}}^2
      \difm{\pp}.
  \end{align*}
  Next, using~\eqref{eq:Ar-bound}, we obtain
  \begin{equation*}
    \normLtwo{\yr}^2
    \le
      M^2
      \max_{\bar{\jmath}} \normF*{\cBr_{\bar{\jmath}}}^2
      \max_{\bar{k}} \normF*{\cCr_{\bar{k}}}^2
      \int_{\pset}
      \frac{
        \myparen*{\sum_{j = 1}^{\qBr} \abs*{\cbr_j(\pp)}}^2
        \myparen*{\sum_{k = 1}^{\qCr} \abs*{\ccr_k(\pp)}}^2
      }{\max_{i} \abs{\car_i(\pp)}^2}
      \difm{\pp}.
  \end{equation*}
  Finally, using that
  $\max_{i = 1, 2, \ldots, \nfom} x_i
  \ge \sqrt{\myparen{\sum_{i = 1}^{\nfom} x_i^2} / \nfom}$
  for nonnegative numbers $x_i$,
  \begin{align*}
    \normLtwo{\yr}^2
    & \le
      \qAr^2 M^2
      \max_{\bar{\jmath}, \bar{k}}
      \normF*{\cBr_{\bar{\jmath}}}^2
      \normF*{\cCr_{\bar{k}}}^2
      \int_{\pset}
      \frac{
        \myparen*{\sum_{j = 1}^{\qBr} \abs*{\cbr_j(\pp)}}^2
        \myparen*{\sum_{k = 1}^{\qCr} \abs*{\ccr_k(\pp)}}^2
      }{\myparen*{\sum_{i = 1}^{\qAr} \abs*{\car_i(\pp)}}^2}
      \difm{\pp} < \infty,
  \end{align*}
  which completes the proof.
\section{Proof of \texorpdfstring{\Cref{thm:gradients}}{Theorem}}%
\label{sec:proofstheorem}
  Rewrite the objective function as
  \begin{subequations}
    \begin{align}
      \obj = {}
      &
        \normLtwomu{\yf}^2
        \label{eq:J1} \\*
      & -
        2 \int_{\pset}
        \trace*{
        \yf(\pp)\herm
        \Cr(\pp) \Ar(\pp)^{-1} \Br(\pp)
        }
        \difm{\pp}
        \label{eq:J2} \\*
      & +
        \int_{\pset}
        \trace*{
        \Br(\pp)\herm \Ar(\pp)\mherm \Cr(\pp)\herm
        \Cr(\pp) \Ar(\pp)^{-1} \Br(\pp)
        }
        \difm{\pp},
        \label{eq:J3}
    \end{align}
  \end{subequations}
  where we used the fact that
  $\ipLtwomu{\yf}{\yr} = \ipLtwomu{\yr}{\yf} \in \RR$ (\Cref{assumption:fom}).
  The part of $\obj$ in~\eqref{eq:J1} does not depend on the reduced quantities,
  so it does not contribute to the gradient.
  Let $\obj_2$ denote the second term~\eqref{eq:J2} in the cost function $\obj$,
  i.e.,
  \[
    \obj_2 = -2 \int_{\pset}
    \trace*{
      \yf(\pp)\herm
      \Cr(\pp) \Ar(\pp)^{-1} \Br(\pp)
    }
    \difm{\pp}.
  \]
  We start by computing $\nabla_{\cAr_i} \obj_2$.
  To do so, we evaluate $\obj_2\myparen{\cAr_i + \Delta\cAr_i}$
  for a perturbation $\Delta\cAr_i$ to obtain
  \begin{align*}
    \obj_2\myparen*{\cAr_i + \Delta\cAr_i}
    & =
      -2 \int_{\pset}
      \trace*{
        \yf(\pp)\herm
        \Cr(\pp) \myparen*{\Ar(\pp) + \car_i(\pp) \Delta\cAr_i}^{-1} \Br(\pp)
      }
      \difm{\pp} \\
    & =
      -2 \int_{\pset}
      \trace*{
        \yf(\pp)\herm
        \Cr(\pp)
        \myparen*{I + \car_i(\pp) \Ar(\pp)^{-1} \Delta\cAr_i}^{-1}
        \Ar(\pp)^{-1}
        \Br(\pp)
      }
      \difm{\pp}.
  \end{align*}
  Assuming small enough $\Delta\cAr_i$,
  using the property in~\eqref{eq:setR} and
  applying the Neumann series formula yield
  \begin{align}
    \notag
    \obj_2\myparen*{\cAr_i + \Delta\cAr_i}
    & =
      -2 \int_{\pset}
      \trace*{
        \yf(\pp)\herm
        \Cr(\pp) \Ar(\pp)^{-1} \Br(\pp)
      }
      \difm{\pp} \\*
    \notag
    & \ \
      + 2 \int_{\pset}
      \trace*{
        \car_i(\pp)
        \yf(\pp)\herm
        \Cr(\pp)
        \Ar(\pp)^{-1} \Delta\cAr_i \Ar(\pp)^{-1}
        \Br(\pp)
      }
      \difm{\pp} \\*
    \notag
    & \ \
      - 2 \int_{\pset}
      \trace*{
        \yf(\pp)\herm
        \Cr(\pp)
        \sum_{m = 2}^\infty
          \myparen*{-\car_i(\pp) \Ar(\pp)^{-1} \Delta\cAr_i}^m \Ar(\pp)^{-1}
        \Br(\pp)
      }
      \difm{\pp} \\
    \notag
    & =
      \obj(\cAr_i)
      + \ipF*{2 \int_{\pset} \overline{\car_i(\pp)} \Ar(\pp)\mherm \Cr(\pp)\herm
        \yf(\pp) \Br(\pp)\herm \Ar(\pp)\mherm \difm{\pp}}%
      {\Delta\cAr_i} \\*
    \label{eq:nabla-J-Ar-remaining-terms}
    & \ \
      - 2
      \sum_{m = 2}^\infty
      \int_{\pset}
      \trace*{
        \yf(\pp)\herm
        \Cr(\pp)
        \myparen*{-\car_i(\pp) \Ar(\pp)^{-1} \Delta\cAr_i}^m \Ar(\pp)^{-1}
        \Br(\pp)
      }
      \difm{\pp}.
  \end{align}
  First, we check that the candidate for the gradient,
  resulting from the second term in the last equation,
  is indeed bounded:
  \begin{align*}
    &
      \normF*{
        \int_{\pset} \car_i(\overline{\pp})
        \Ar(\pp)\mherm \Cr(\pp)\herm
        \yf(\pp)
        \Br(\pp)\herm \Ar(\pp)\mherm
        \difm{\pp}
      } \\
    & \le
      \int_{\pset} \abs*{\car_i(\pp)}
      \normF*{\Ar(\pp)^{-1}}
      \normF*{\Cr(\pp)} \normF*{\Ar(\pp)^{-1}} \normF*{\Br(\pp)}
      \normF*{\yf(\pp)}
      \difm{\pp} \\
    & \le
      \normLinfmu*{\car_i(\dotvar) {\Ar(\dotvar)}^{-1}}
      \int_{\pset}
      \normF*{\Cr(\pp)} \normF*{\Ar(\pp)^{-1}} \normF*{\Br(\pp)}
      \normF*{\yf(\pp)}
      \difm{\pp} < \infty,
  \end{align*}
  where we used~\eqref{eq:abc-l2-bounded} and~\eqref{eq:setR}
  (see~\eqref{eq:l2-bound-norm-prod} in the proof of
  \Cref{lem:rom-set-open-yr-bounded}).
  Second, we check that the remaining terms
  in~\eqref{eq:nabla-J-Ar-remaining-terms} are of lower order:
  \begin{align*}
    &
      \abs*{
        \sum_{m = 2}^\infty
        \int_{\pset}
        \trace*{
          \yf(\pp)\herm
          \Cr(\pp)
          \myparen*{-\car_i(\pp) \Ar(\pp)^{-1} \Delta\cAr_i}^m \Ar(\pp)^{-1}
          \Br(\pp)
        }
        \difm{\pp}
      } \\
    & \le
      \normLtwomu{\yf}
      \sum_{m = 2}^\infty
      \normLtwomu*{
        \Cr(\dotvar)
        \myparen*{-\car_i(\dotvar) {\Ar(\dotvar)}^{-1} \Delta\cAr_i}^m
          {\Ar(\dotvar)}^{-1}
        \Br(\dotvar)
      } \\
    & \le
      \normLtwomu{\yf}
      \sum_{m = 2}^\infty
      \normLinf*{\car_i(\dotvar) {\Ar(\dotvar)}^{-1}}^m
      \normLtwomu*{
        \normF[\big]{\Cr(\dotvar)}
        \normF[\big]{{\Ar(\dotvar)}^{-1}}
        \normF[\big]{\Br(\dotvar)}
      }
      \normF[\big]{\Delta\cAr_i}^m \\
    & =
      \normLtwomu{\yf}
      \normLtwomu*{
        \normF[\big]{\Cr(\dotvar)}
        \normF[\big]{{\Ar(\dotvar)}^{-1}}
        \normF[\big]{\Br(\dotvar)}
      }
      \frac{
        \normLinf*{\car_i(\dotvar) {\Ar(\dotvar)}^{-1}}^2
        \normF[\big]{\Delta\cAr_i}^2
      }{1 -
        \normLinf*{\car_i(\dotvar) {\Ar(\dotvar)}^{-1}}
        \normF[\big]{\Delta\cAr_i}
      } \\
    & =
      o\myparen*{\normF[\big]{\Delta\cAr_i}}.
  \end{align*}
  Therefore,
  \begin{align*}
    \nabla_{\cAr_i} \obj_2
    & =
      2 \int_{\pset} \car_i(\overline{\pp})
      \Ar(\pp)\mherm \Cr(\pp)\herm
      \yf(\pp)
      \Br(\pp)\herm \Ar(\pp)\mherm
      \difm{\pp}.
  \end{align*}
  Next, we compute $\nabla_{\cBr_j} \obj_2$ similarly by evaluating
  $\obj_2\myparen{\cBr_j + \Delta\cBr_j}$:
  \begin{align*}
    \obj_2\myparen*{\cBr_j + \Delta\cBr_j} = {}
    &
      {-2} \int_{\pset}
      \trace*{
        \yf(\pp)\herm
        \Cr(\pp) \Ar(\pp)^{-1} \myparen*{\Br(\pp) + \cbr_j(\pp) \Delta\cBr_j}
      }
      \difm{\pp} \\
    = {}
    &
      \obj_2(\cBr_j)
      - 2 \int_{\pset}
      \trace*{
        \cbr_j(\pp)
        \yf(\pp)\herm
        \Cr(\pp) \Ar(\pp)^{-1} \Delta\cBr_j
      }
      \difm{\pp} \\
    = {}
    &
      \obj_2(\cBr_j)
      - 2
      \ipF*{
        \int_{\pset}
        \cbr_j(\overline{\pp})
        \Ar(\pp)\mherm \Cr(\pp)\herm \yf(\pp)
        \difm{\pp}
      }{\Delta\cBr_j}.
  \end{align*}
  It follows from~\eqref{eq:abc-l2-bounded} and~\eqref{eq:setR} that the
  mapping $\pp \mapsto \cbr_j(\overline{\pp}) \Ar(\pp)\mherm \Cr(\pp)\herm$
  is square-integrable.
  Therefore
  \begin{equation*}
    \nabla_{\cBr_j} \obj_2
    =
    -2
    \int_{\pset}
    \cbr_j(\overline{\pp})
    \Ar(\pp)\mherm \Cr(\pp)\herm \yf(\pp)
    \difm{\pp}.
  \end{equation*}
  Similarly, one can obtain
  \begin{equation*}
    \nabla_{\cCr_k} \obj_2
    =
    -2
    \int_{\pset}
    \ccr_k(\overline{\pp})
    \yf(\pp) \Br(\pp)\herm \Ar(\pp)\mherm
    \difm{\pp}.
  \end{equation*}
  Finally, after differentiating the last part of $\obj$ in~\eqref{eq:J3},
  we obtain
  \begin{align*}
    \nabla_{\cAr_i} \obj = {}
    &
      2 \int_{\pset} \car_i(\overline{\pp})
      \Ar(\pp)\mherm \Cr(\pp)\herm
      \myparen*{\yf(\pp) - \Cr(\pp) \Ar(\pp)^{-1} \Br(\pp)}
      \Br(\pp)\herm \Ar(\pp)\mherm
      \difm{\pp}, \\
    \nabla_{\cBr_j} \obj = {}
    &
      2 \int_{\pset} \cbr_j(\overline{\pp})
      \Ar(\pp)\mherm \Cr(\pp)\herm
      \myparen*{\Cr(\pp) \Ar(\pp)^{-1} \Br(\pp) - \yf(\pp)}
      \difm{\pp},\ \textnormal{ and} \\
    \nabla_{\cCr_k} \obj = {}
    &
      2 \int_{\pset} \ccr_k(\overline{\pp})
      \myparen*{\Cr(\pp) \Ar(\pp)^{-1} \Br(\pp) - \yf(\pp)}
      \Br(\pp)\herm \Ar(\pp)\mherm
      \difm{\pp},
  \end{align*}
  which completes the proof.


\newcommand{\etalchar}[1]{$^{#1}$}

\addcontentsline{toc}{chapter}{References}
\end{document}